\theoremstyle{plain}
\newtheorem{theorem}{Theorem}[section]
\newtheorem{corollary}[theorem]{Corollary}
\newtheorem{lemma}[theorem]{Lemma}
\newtheorem{remark}[theorem]{Remark}
\newtheorem{proposition}[theorem]{Proposition}
\newcommand{\RR} {\mathbb R}
\newcommand{\NN} {\mathbb N}
\newcommand{\NNN}{\mathcal{N}}
\newcommand{\pa} {\partial}
\newcommand{\Cal} {\mathcal}
\newcommand{\beq} {\begin{equation}}
\newcommand{\eeq} {\end{equation}}
\newcommand{\capacity}{\operatorname{cap}}
\numberwithin{equation}{section}
\begin{document}

\title{
Nodal sets of Laplace eigenfunctions under small perturbations}
\author{Mayukh Mukherjee and Soumyajit Saha}
\address{IIT Bombay, Powai, Maharashtra 400076, India}
\email{mathmukherjee@gmail.com}
\email{ssaha@math.iitb.ac.in}
\maketitle
\begin{abstract}
	We study the stability properties of nodal sets of Laplace eigenfunctions on compact manifolds under specific small perturbations. We prove that nodal sets are fairly stable if such perturbations are relatively small, more formally, supported at a sub-wavelength scale.  
	We do not need any generic assumption on the topology of the nodal sets or the simplicity of the Laplace spectrum. As an indirect application, we are able to show that a certain ``Payne property'' concerning the second nodal 
	set remains stable under controlled perturbations of the domain.
\end{abstract}
\section{Introduction}

In this note, we are largely interested in certain aspects of the stability of nodal sets of Laplace eigenfunctions under small perturbations. 
We start by recalling that the Laplace-Beltrami operator\footnote{We use the analyst's sign convention, that is, $-\Delta$ is positive semidefinite.} on any compact manifold (with or without boundary) has a discrete spectrum 
$$ 
\lambda_0 = 0 \leq \lambda_1 \leq \dots \leq \lambda_k \leq \dots \nearrow \infty,
$$
with corresponding smooth real-valued eigenfunctions $\varphi_k$ that satisfy
$$
-\Delta \varphi_k = \lambda_k \varphi_k.
$$
Recall that $\NNN_\varphi = \{ x \in M : \varphi (x) = 0\}$ denotes the nodal set (or vanishing set) of the eigenfunction $\varphi$, and any connected component of $M \setminus \NNN_\varphi$ is known as a nodal domain of the eigenfunction $\varphi$. These are domains where the eigenfunction is not sign-changing (this follows from the maximum principle). Recall further that the nodal set is the union of a $(n - 1)$-dimensional smooth hypersurface and a ``singular set'' that is countably $(n - 2)$-rectifiable (\cite{HS}). 
 As a notational remark, when two quantities $X$ and $Y$ satisfy $X \leq c_1 Y$ ($X \geq c_2Y$) for constants $c_1, c_2$ dependent on the geometry $(M, g)$, we write $X \lesssim_{(M, g)} Y$ (respectively $X \gtrsim_{(M, g)} Y$). 
	Unless otherwise mentioned, 
	these constants will in particular be independent of eigenvalues $\lambda$. Throughout the text, the quantity $\frac{1}{\sqrt{\lambda}}$ is referred to as the wavelength and any quantity (e.g. distance) is said to be of sub-wavelength (super-wavelength) order if it is $\lesssim_{(M, g)} \frac{1}{\sqrt{\lambda}}$ (respectively $\gtrsim_{(M, g)} \frac{1}{\sqrt{\lambda}}$). 

In a certain sense, the study of eigenfunctions of the Laplace-Beltrami operator is the analogue of Fourier analysis in the setting of compact Riemannian manifolds. Recall that the Laplace eigenequation is the standing state for a variety of partial differential equations modelling physical phenomena like 
heat diffusion, wave propagation or Schr\"{o}dinger problems. 
Below, we note down this well-known method of ``Fourier synthesis'':
    \begin{eqnarray}
    \text{Heat equation} & (\pa_t - \Delta)u = 0 & u(t, x) = e^{-\lambda t}\varphi(x)\\
    \text{Wave equation } & (\pa^2_t - \Delta)u = 0 & u(t, x) = e^{i\sqrt{\lambda} t}\varphi(x)\\
    \text{Schr\"{o}dinger equation} & (i\pa_t - \Delta)u = 0 & u(t, x) = e^{i\lambda t}\varphi(x)
    \end{eqnarray}
Further, note that $u(t, x) = e^{\sqrt{\lambda }t}\varphi(x)$ solves the harmonic equation $(\pa^2_t + \Delta)u = 0$ on $\RR \times M$. 

A motivational perspective of the study of Laplace eigenfunctions then comes from quantum mechanics (via the correspondence with Schr\"{o}dinger operators), where the $ L^2 $-normalized eigenfunctions induce a probability density $ \varphi^2(x) dx $, i.e., the probability density of a particle of energy $ \lambda $ to be at $ x \in M $.
Another physical (real-life) motivation of studying the eigenfunctions, dated back to the late 18th century, is based on the acoustics experiments done by E. Chladni which were in turn inspired from the observations of R. Hooke in the 17th century. The experiments consist of drawing a bow over a piece of metal plate whose surface is lightly covered with sand. When resonating, the plate is divided into regions that vibrate in opposite directions causing the sand to accumulate on parts with no vibration. 
The study of the patterns formed by these sand particles (Chladni figures) were of great interest which led to the study of nodal sets and nodal domains. Among other things, one interesting 
line of speculation is to see how the Chladni figures change after we distort our plate ``slightly''. 
A variant of the above question is the following: suppose hypothetically one could construct a vibrating plate which is perfectly symmetrical. Due to the symmetry, the eigenfunctions might appear with some multiplicity, and then a very natural question is: the sand pattern will assume the nodal set of which eigenfunction once we hit the corresponding frequency\footnote{The heuristic that symmetry implies spectral multiplicity seems to be well-known in the community. If the symmetry group of the plate is non-commutative, then some one can bring in some representation theoretic arguments to justify the above mentioned multiplicity. 
For a detailed proof, see \cite{Ta}.}? Unfortunately, we do not know the answer to this question. However, since physical objects are never perfectly symmetrical (or one can perturb the plate slightly), 
considerations based on transversality (see \cite{U}) dictate that almost all such plates have simple spectrum, and it is interesting to observe the pattern on the perturbed plates to try to conjecture which pattern the sand particles would assume on the hypothetical symmetrical plate. But this entails achieving a stability result on the nodal sets under slight perturbations.


We begin by listing below some questions/speculations that we find interesting and try to give some motivations behind them.
\begin{enumerate}
    \item[1.] It is known that many aspects of nodal sets of Laplace eigenfunctions are rather unstable under perturbation, which normally disallows perturbative techniques (like normalized Ricci flow and related geometric flows etc.) in the study of nodal geometry. 
    Even $C^\infty$-convergence of eigenfunctions is not strong enough to give convergence of nodal sets in the Hausdorff sense (or any other appropriate sense). The problem is, in the limit the nodal set can become topologically non-generic (for example, developing a node). As a trivial example, consider the functions $x^2y^2 + \epsilon^2$, which converge to $x^2y^2$ and develop a non-trivial nodal set in the limit. 
    This instability is observed for many metric properties like $\Cal{H}^{n - 1}(\NNN_\varphi)$, inner radius estimates of nodal domains etc whose known estimates are far better for real analytic metrics than for smooth metrics. However, one is inclined to ask the question that if the perturbations are ``small enough'', are there certain ``soft'' properties of the nodal set that are still reasonably stable? 
    From heuristic considerations, one further speculates that  perturbations which do not disturb the nodal set too much, if they exist, should be supported at a subwavelength scale.
    \item[2.] A variant of the above question is the following. Consider two compact orientable surfaces $M$ and $N$. Can one define a metric on the connected sum $M \# N$ such that the nodal sets of first $l$ eigenfunctions are completely contained in $M$, where $l$ can be arbitrarily large? In other words, the low energy  nodal sets must not ``see'' $N$. 
    \item[3.] Thirdly, there are whole classes of results about nodal sets which are true under rather restrictive metric assumptions. As an example, there are results for low energy eigenfunctions which are true only under the assumption of convexity of the domain. For example, see Theorem 1.1 of \cite{M}, or the main results of \cite{J, GJ}. As a preliminary case, one can then begin to speculate if such results are still true on slightly perturbed (non-convex) domains if perturbations are small enough. More interestingly, one can consider a variant of the above questions in the spirit of homogenization problems, that is, are the aforementioned results true on ``perforated'' domains with the ``size'' of the perforations in some appropriate sense (eg., capacity or volume) being smaller than some critical threshold? 
    \item[4.] A variant of the question in 2. above has been answered in \cite{K} under assumptions on $M$, and also topological assumptions on the nodal set, and for only the first non-trivial eigenfunction. Could we remove these assumptions, and prove a result  on general manifolds $M$ and $N$, for any eigenfunction, and without any topological restrictions on the nodal set? 
\end{enumerate}
We are able to give 
reasonable answers to several of the above questions  (see Subsection \ref{subsec:overview} below). We wish to draw particular attention to the methods of \cite{T}, which were adapted by \cite{K} (and also further work in \cite{T1, AT}), which in turn were further modified by \cite{EP-S} (who used these ideas in conjunction with ideas introduced in \cite{CdV}). We also refer the reader to \cite{A, AC}, which were precursors to the work of \cite{T}. Also, a significant number of ideas are (sometimes implicitly) contained in the paper \cite{RT}. Our methods are largely inspired from (and follow in certain details) the above references. Note that the topological restrictions on the nodal set (namely, the nodal set being a submanifold of $M$) in \cite{K, EP-S} etc. are actually generic in the sense of \cite{U}. 
\subsection{Overview of the paper}\label{subsec:overview} In Section \ref{sec:2}, we outline the general constructions involved in the ``connected sums'' of two closed manifolds or two domains (with boundary and Dirichlet conditions), and the associated perturbative shrinking process and state our main results. In Section \ref{sec:3}, we outline some results of \cite{T} and \cite{K}, and point out some modified settings under which such results still continue to hold. We also prove our main result Theorem \ref{thm:main_thm} in this section, which addresses Questions 
2 and 4 as mentioned above. In Section \ref{sec:4}, we go over some applications of Theorem \ref{thm:main_thm} and also some related ideas in the spirit of \cite{O, RT}. Here we outline some applications to the well-known Payne problem of the nodal set of the second Dirichlet eigenfunction intersecting (or not intersecting) the boundary. We give two results to this end, which are recorded in Propositions \ref{cor:Melas} and \ref{cor:del_hol_cap}. These are aimed at addressing Question 3 above. Finally, we quickly outline the proof of Corollary \ref{cor:Lewy}, which gives an extension of the main Theorem of \cite{K}.
\section{Statements of the results}\label{sec:2} 
\subsection{Some general constructions}
Let $(M_1, g_1)$ and $(M_2, g_2)$ be two closed connected oriented Riemannian manifolds of dimension $ n\geq 2$. 
Following Takahashi in \cite{T}, we can construct the metric 
\begin{equation}\label{eq:Riem_met_def}
    \Tilde{g_{\epsilon}}= 
\left\{
	\begin{array}{ll}
	g_1 & \mbox{on } M_1(\epsilon) \\
		\epsilon^2 g_2 & \mbox{on } M_2(1)
	\end{array}
\right.
\end{equation}
on $$M:= M_1(\epsilon) \cup_{\phi_{\epsilon}} M_2(1),$$
where
$$M_1(\epsilon)=M_1 \setminus B(x_1, \epsilon), \hspace{5pt} x_1\in M_1,$$
$$M_2(1)= M_2 \setminus B(x_2, 1), \hspace{5pt} x_2\in M_2,$$
and $$\phi_{\epsilon}: \pa M_1(\epsilon)\to \pa M_2(1) \hspace{5pt} \text{ defined as } x\mapsto x/\epsilon$$
is the ``attaching map''. Observe that it is fine to take $\epsilon < \text{inj}M_1$, and by scaling $g_2$, one can assume without loss of generality that $\text{inj}M_2 > 1$. It is clear that for $M$ to have the ``natural'' orientation, the attaching map $x \mapsto x/\epsilon$ has to be orientation reversing. \newline
\noindent Also, define
$$M_2(\epsilon):=  (M_2(1), \epsilon^2g_2).$$
Note that the metric $\Tilde{g_{\epsilon}}$ is only piecewise smooth, but one can still define Laplacians etc. via piecewise quadratic forms (for example, see Definition 2.2-2.4 and Lemma 2.5 in \cite{T}). As in \cite{BU}, it is known that Laplace spectrum varies continuously with respect to $C^0$ topology. So, one can perturb $\tilde{g}_\epsilon$ slightly to get a smooth metric $g_\epsilon$ whose spectrum is arbitrarily close to that of $\tilde{g}_\epsilon$ (see also Step 4 of Section 2 of \cite{EP-S}). So, one can instead work with smooth metrics $g_\epsilon$ such that $g_\epsilon|_{M_1(\epsilon/2)} = g_1$ which are arbitrarily close to $\tilde{g}_\epsilon$ in $C^0$-topology and with arbitrarily close spectrum. 

With that in place, now define
$$M_{\epsilon}= (M, g_{\epsilon}).$$
\subsection{Statements of the results}
We begin by stating our main theorem:
\begin{theorem}\label{thm:main_thm}
Consider the connected sum $M_1\#M_2$ of two closed oriented Riemannian manifolds $M$ and $N$ of dimension $n \geq 2$. Given  a natural number $m$, we can define a one-parameter family of (smooth) metrics $g_\epsilon$ on $M_1 \# M_2$ such that $g_\epsilon|_{M_1(\epsilon)} = g_1|_{M_1(\epsilon)}$, and the nodal set of any eigenfunction $\varphi_l$ of the Laplace-Beltrami operator on $(M_1 \# M_2, g_\epsilon)$ lies completely inside $M_1$ for all $\epsilon < c(m)$ and $l \leq m$, where $c(m)$ is a positive constant dependent only on $m$ and the geometry of $(M_1, g_1)$. 
\end{theorem}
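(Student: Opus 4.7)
The plan is to exploit a sharp separation of scales: shrinking $M_2$ by the factor $\epsilon^2$ pushes its intrinsic Dirichlet spectrum up to order $\epsilon^{-2}$, while the low-energy eigenvalues of $M_\epsilon$ stay bounded in terms of $m$ (they converge to those of $M_1$). Any bounded-energy eigenfunction of $M_\epsilon$ is thus forced to be nearly harmonic on the attached piece $M_2(1)$, and a maximum principle rules out interior zeros of such a function once its boundary values on $\pa M_2(1)$ have a definite sign. So the proof reduces to (i) controlling the low eigenvalues and eigenfunctions of $M_\epsilon$ in terms of those of $M_1$, and (ii) arranging the gluing point $x_1$ so that the boundary values of every eigenfunction $\varphi_l^\epsilon$ (for $l\leq m$) on the neck have a fixed sign.

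For (i), I would invoke the spectral convergence results of \cite{T}: for each fixed $l$, $\lambda_l(M_\epsilon)\to\lambda_l(M_1)$ as $\epsilon\to 0$, giving a uniform upper bound $\lambda_l(M_\epsilon)\leq \Lambda(m)$ for $l\leq m$ and $\epsilon<\epsilon_0(m)$. Working with $L^2$-normalized eigenfunctions $\varphi_l^\epsilon$ and interior elliptic estimates on $M_1(\delta)$ for any fixed small $\delta>0$, a compactness/diagonal argument produces subsequential limits $\psi_l$ that are eigenfunctions for the limiting eigenvalue on $M_1$, with $\varphi_l^\epsilon\to\psi_l$ in $C^k_{\mathrm{loc}}(M_1\setminus\{x_1\})$. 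For (ii), I would choose $x_1\in M_1$ so that, after a suitable choice of orthonormal basis within each eigenspace $E_\lambda$ of eigenvalue $\lambda\leq\lambda_m(M_1)$, the basis vectors are all nonzero at $x_1$; this is possible for generic $x_1$, since for any fixed basis of a given $E_\lambda$ the joint vanishing locus in $M_1$ is a finite union of $(n-1)$-dimensional sets and hence has empty interior.

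Having arranged $\psi_l(x_1)\neq 0$ and local uniform convergence $\varphi_l^\epsilon\to\psi_l$ on a small annulus $B(x_1,r)\setminus B(x_1,\epsilon)$, the boundary values $\varphi_l^\epsilon|_{\pa B(x_1,\epsilon)}$ are uniformly close to $\psi_l(x_1)$ for $\epsilon$ small, and in particular of constant sign equal to that of $\psi_l(x_1)$. On $M_2(1)$ the eigenequation reads, in the unscaled metric $g_2$, $-\Delta_{g_2}\varphi_l^\epsilon = \epsilon^2\lambda_l(M_\epsilon)\varphi_l^\epsilon$, and the coefficient $\epsilon^2\lambda_l(M_\epsilon)$ tends to $0$, in particular lying below the first Dirichlet eigenvalue $\mu_1(M_2(1),g_2)$ for $\epsilon$ small. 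Decomposing $\varphi_l^\epsilon=v_0+v_1$ on $M_2(1)$ with $v_0$ harmonic matching the boundary data and $v_1$ satisfying the perturbed equation with zero boundary data, a standard Green-operator estimate of the form $\|v_1\|_\infty \lesssim \epsilon^2\lambda_l(M_\epsilon)\|v_0\|_\infty/(\mu_1-\epsilon^2\lambda_l(M_\epsilon))$ shows that $v_1$ is arbitrarily small compared to $v_0$, while $v_0$ inherits the sign of its boundary data by the classical maximum principle. Hence $\varphi_l^\epsilon$ does not vanish on $M_2(1)$, and $\NNN_{\varphi_l^\epsilon}\subset M_1(\epsilon)\subset M_1$.

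The hard part will be the choice of $x_1$ in the presence of spectral multiplicity on $M_1$. If $\dim E_\lambda\geq 2$, the subsequential limits $\psi_l$ inside $E_\lambda$ are determined by the specific perturbation and may a priori fall on the hyperplane $\{v\in E_\lambda : v(x_1)=0\}$, in which case the boundary-sign argument collapses for that index. One must therefore exhibit enough freedom — either in the perturbation parameters (e.g.\ by a further generic perturbation of $g_\epsilon$ making the spectrum of $M_\epsilon$ simple), in a simultaneous choice of $x_1$ across all relevant eigenspaces, or by arguing directly at the level of spectral projectors rather than individual eigenfunctions — to guarantee that every convergent subsequence produces a limit non-vanishing at $x_1$. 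Carrying this out, presumably via a generic choice of $x_1$ combined with a perturbation analysis in the spirit of \cite{T, K, EP-S}, is expected to be the chief technical burden of the proof.
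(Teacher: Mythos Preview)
Your overall strategy is sound and is a genuinely different packaging from the paper's. The paper also invokes Takahashi's spectral convergence and the $C^\infty$-convergence of eigenfunctions on compact subsets of $M_1\setminus\{x_1\}$ (Theorems \ref{thm:Tak_1.1} and \ref{theo: eigenfunction convergence}), but instead of your direct maximum-principle argument it runs a contradiction through nodal domains: if some nodal set of $f_k^\epsilon$ escaped $M_1(\epsilon_0)$, either a full nodal domain $\Omega'$ of $f_k^\epsilon$ would sit inside $M_2(\epsilon)$, whence $\lambda_k(M_\epsilon)=\lambda_1(\Omega')\geq \epsilon^{-2}\lambda_1(M_2(1))\to\infty$, contradicting $\lambda_k(M_\epsilon)\to\lambda_k(M_1)$; or the nodal set would cross $\partial M_1(\epsilon_0)$, producing a limit point on that \emph{fixed} sphere lying in $\NNN_k(M_1)$, contradicting the choice of $x_1$. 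Your harmonic-plus-remainder decomposition on $M_2(1)$ is a clean alternative to the first case and essentially encodes the same scaling $\mu_1(M_2(1))>\epsilon^2\lambda_l(M_\epsilon)$.

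There is, however, a genuine gap where you pass from convergence on compact subsets of $M_1\setminus\{x_1\}$ to the claim that $\varphi_l^\epsilon|_{\partial B(x_1,\epsilon)}$ is uniformly close to $\psi_l(x_1)$. The sphere $\partial B(x_1,\epsilon)$ shrinks to $x_1$ and is \emph{not} contained in any fixed compact set away from $x_1$, so Theorem \ref{theo: eigenfunction convergence} gives no control there; you would need uniform-in-$\epsilon$ $C^0$ bounds on $\varphi_l^\epsilon$ across the neck, which is nontrivial on the degenerating family. The paper avoids this entirely by working at a \emph{fixed} radius $\epsilon_0$ (chosen small enough that $B(x_1,2\epsilon_0)$ misses $\NNN_k(M_1)$) and arguing about $M_\epsilon\setminus M_1(\epsilon_0)$. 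Your argument can be repaired in the same way: take the boundary data on $\partial B(x_1,\epsilon_0)$ (where convergence is legitimate), and run your perturbation-of-harmonic argument on the union of the annulus and $M_2(\epsilon)$, noting that its first Dirichlet eigenvalue is at least $\min\{\lambda_1(B(x_1,\epsilon_0)),\epsilon^{-2}\lambda_1(M_2(1))\}$, which can be pushed above $\Lambda(m)$ by taking $\epsilon_0$ small.

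On multiplicity, you have correctly located the difficulty, but the paper's resolution is not among the options you list. Rather than perturbing to simplicity or finding a single $x_1$ that works for all of $E_\lambda$, the paper allows $x_1$ to depend on the particular limit eigenfunction $f_k$ and then invokes the topological fact (Palais, \cite{P}) that the connected sum is independent of the gluing ball, asserting that the Riemannian manifolds so obtained are isometric; since the statement of Theorem~\ref{thm:main_thm} makes no reference to $x_1$, this suffices. Your proposed routes (generic simplicity of the $M_\epsilon$-spectrum, or working with spectral projectors) may also work, but they are different arguments from what the paper actually does.
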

\begin{remark}\label{rem:2.2}
It will be clear from the proof that once $m > m_0$ (where $m_0$ is a  large enough constant depending on the initial geometric data), the allowable perturbations must be  below the length scale $\epsilon 
\leq cm^{-1/n}$, where the constant $c$ also depends on the initial geometric data (for a more precise description, see the discussion following the proof of Theorem \ref{thm:main_thm} below).
\end{remark}
Recall that Courant's nodal domain theorem says that an eigenfunction corresponding to $\lambda_j$ can have at most $j$ nodal domains. A series of results starting from work by Pleijel seeks to prove (in different settings) that there are only finitely many Courant sharp eigenvalues. Turning things around, one can ask the question: what about higher eigenfunctions which have very few nodal domains? Of course, by orthogonality, such higher eigenfunctions have must have at least $2$ nodal domains. As a corollary of Theorem \ref{thm:main_thm} above (and classical work by \cite{L}), we are able to show that on any closed surface of genus $\gamma \geq 1$, one can design a metric such that it has arbitrarily many eigenfunctions with $2$ nodal domains. In fact, we show slightly more:
\begin{corollary}\label{cor:Lewy}
Given a closed surface $M$ of genus $\gamma \geq 1$ and a natural number $m \in \NN$, one can find a metric $g = g(m)$ on $M$ such that there are $\mu(m)$ (respectively, $\nu(m)$) eigenfunctions $\varphi_j, j \leq m$ 
which have $ 2$ (respectively $3$) nodal domains. Here $\mu(m)$ (respectively, $\nu(m)$) is the number of 
odd (respectively, even) integers $k$ such that $k(k + 1) \leq \lambda_m(S^2, g_{round})$. 
\end{corollary}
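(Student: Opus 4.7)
The topological identity $S^2 \# \Sigma \cong \Sigma$ (valid for every closed surface $\Sigma$) lets us realise the given surface $M$ of genus $\gamma \geq 1$ as a connected sum $M_1 \# M_2$, with $M_1 = (S^2, g_{round})$ and $M_2$ a copy of $M$ equipped with any fixed background metric. Applying Theorem~\ref{thm:main_thm} to this decomposition, with the prescribed $m$, produces a one-parameter family of metrics $g_\epsilon$ on $M$ for all $\epsilon < c(m)$, such that the nodal set of every eigenfunction $\varphi_l^{(\epsilon)}$ with $l \leq m$ is contained in $M_1(\epsilon) = S^2 \setminus B(x_1, \epsilon)$. The other key ingredient is Lewy's classical theorem~\cite{L}: for every odd $k \geq 1$ there exists a spherical harmonic $\psi_k$ of degree $k$ on $(S^2, g_{round})$ with exactly $2$ nodal domains, and for every even $k \geq 2$ one with exactly $3$ nodal domains. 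Letting $K$ be the largest integer with $K(K+1) \leq \lambda_m(S^2, g_{round})$, the numbers $\mu(m)$ and $\nu(m)$ are exactly the counts of odd and even integers in $\{1, \ldots, K\}$, so it suffices to produce, for each $k \in \{1, \ldots, K\}$, one eigenfunction of $(M, g_\epsilon)$ (with index $\leq m$) whose nodal count equals that of $\psi_k$.

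I would then invoke spectral convergence for the collapsing construction of~\cite{T, AC} (which also underlies the proof of Theorem~\ref{thm:main_thm}): as $\epsilon \to 0$, the first $m$ eigenvalues of $(M, g_\epsilon)$ converge to those of $(S^2, g_{round})$, while the intrinsic eigenvalues of $M_2(\epsilon)$ scale like $\epsilon^{-2}$ and do not enter the low spectrum. Kato perturbation theory yields convergence of the corresponding spectral projectors, so that for each $k \leq K$, the span of those $\varphi_l^{(\epsilon)}$ (with $l \leq m$) whose eigenvalue lies near $k(k+1)$ forms a $(2k+1)$-dimensional near-eigenspace $E_k^{(\epsilon)}$ approximating the full degree-$k$ harmonic eigenspace on $S^2$. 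After choosing $x_1 \in S^2$ away from $\bigcup_{k \leq K} \NNN_{\psi_k}$ (only finitely many one-dimensional constraints), I select inside $E_k^{(\epsilon)}$ an element $\tilde{\psi}_k^{(\epsilon)}$ converging to $\psi_k$ in $C^\infty_{loc}(S^2 \setminus \{x_1\})$; such a selection is standard once the spectral projectors converge.

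To conclude, I count the nodal domains of $\tilde{\psi}_k^{(\epsilon)}$ on $(M, g_\epsilon)$. By Theorem~\ref{thm:main_thm}, $\NNN_{\tilde{\psi}_k^{(\epsilon)}} \subset M_1(\epsilon)$, hence the entire piece $M_2(\epsilon)$ lies in a single nodal domain, namely the one containing the neck $\pa M_1(\epsilon)$. Since $\psi_k(x_1) \neq 0$ and $\tilde{\psi}_k^{(\epsilon)}$ is $C^0$-close to $\psi_k$ on a fixed neighbourhood of $x_1$, this neck sits in a single component of $M_1(\epsilon) \setminus \NNN_{\tilde{\psi}_k^{(\epsilon)}}$; thus the total nodal count of $\tilde{\psi}_k^{(\epsilon)}$ on $(M, g_\epsilon)$ equals its count on $M_1(\epsilon)$, which in turn matches the count of $\psi_k$ on $S^2$ (namely $2$ or $3$) because Lewy's harmonics can be arranged to have transverse nodal sets and small $C^1$-perturbations of transverse zero sets preserve the component structure. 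Taking $\epsilon$ small enough for all these convergences to hold simultaneously, the single metric $g = g_\epsilon$ furnishes $\mu(m)$ eigenfunctions with $2$ nodal domains and $\nu(m)$ with $3$. The main obstacle is this last perturbative step: quantitatively controlling the $C^1$-closeness of $\tilde{\psi}_k^{(\epsilon)}$ to $\psi_k$ against the transversality scale of $\NNN_{\psi_k}$ away from $x_1$, so that no nodal component of $\psi_k$ is destroyed nor a spurious one created.
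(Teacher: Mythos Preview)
Your proposal is correct and follows essentially the same route as the paper: realise $M$ as $S^2 \# M$, apply Theorem~\ref{thm:main_thm}, invoke Lewy's theorem, and then argue that the nodal domain count is preserved under the perturbation. The only notable difference is in the last step: the paper does not ``arrange'' transversality but instead observes (via Cheng's structure theorem for nodal sets on surfaces) that a spherical harmonic with exactly $2$ (respectively $3$) nodal domains automatically has nodal set equal to a single embedded circle (respectively two disjoint embedded circles), hence a smooth submanifold; it then cites the Hausdorff-convergence and Thom isotopy argument already worked out in Proposition~\ref{cor:Melas} to transfer the count. This is marginally cleaner than your $C^1$-transversality formulation, since it sidesteps the need to verify that Lewy's specific harmonics have no nodal critical points.
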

We remark that Corollary \ref{cor:Lewy} gives an extension of the Main Theorem of \cite{K} (see p 2405 and Problem 1.2 of \cite{K}).

Our proof can also be mimicked to cover other analogous cases. For example, consider a domain $\Omega \subset \RR^n$ and a one-parameter family of deformations $\Omega_\epsilon$, where $\Omega_\epsilon$ can be written as a disjoint union $\Omega_1^\epsilon \sqcup \Omega_2^\epsilon$, where 
\begin{itemize}
    \item $\Omega_1^\epsilon := \Omega \setminus B(x_1, \epsilon)$, for some $x_1 \in \pa \Omega$.
    \item $\Omega_2 \subset \RR^n$ is another domain, and $\Omega_2^\epsilon$ is obtained from $\Omega_2$ by scaling $g|_{\Omega_2^\epsilon} = \epsilon^2 g|_{\Omega_2}$ (please see diagram below). 
\end{itemize}   

Observe that we have started with a Euclidean domain which can be expressed as the disjoint union of two Euclidean domains similar to a ``tinkertoy'' or ``ball-and-socket'' joint. This 
is designed precisely to ensure that each deformation $\Omega_\epsilon$ is a Euclidean domain. In particular, one cannot blindly repeat the ``attachment''  construction as in  (\ref{eq:Riem_met_def}) above, as the attached metric will not be Euclidean (or even flat) in general.

 \vspace{2mm}
    $$\includegraphics[height=3.7cm]{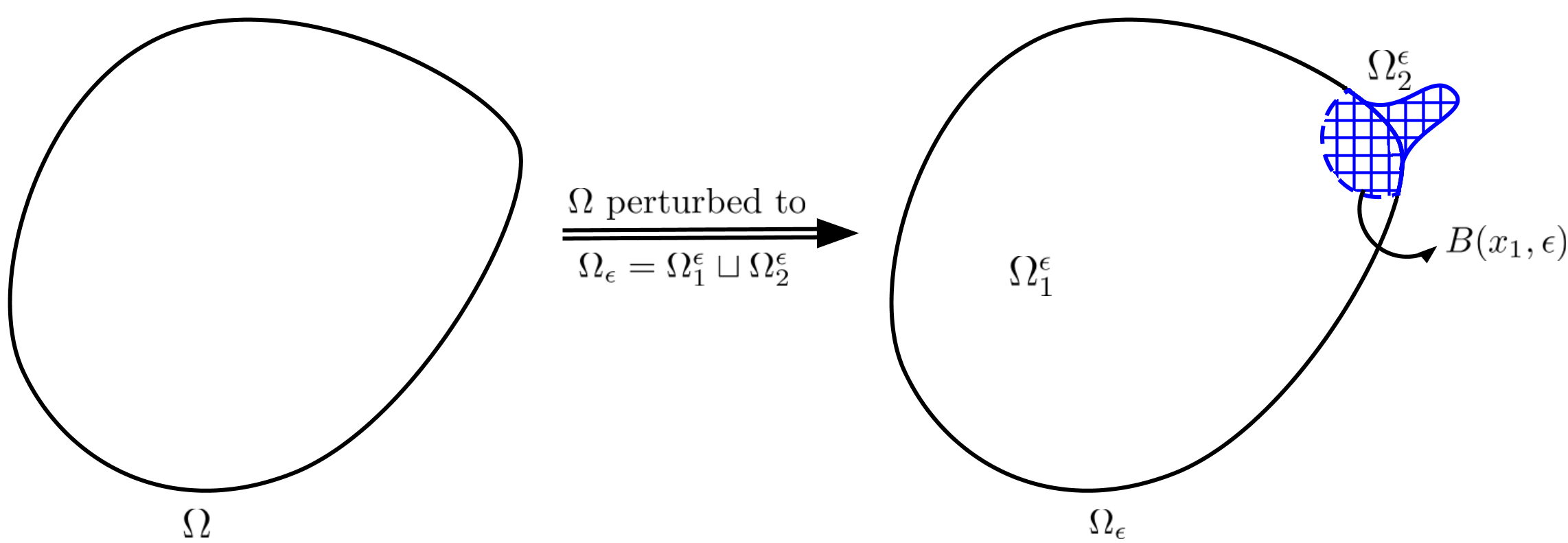}$$

Our findings will also extend to this setting when one considers the Laplacian with Dirichlet boundary conditions. 
In particular, this shows that conclusions like Theorem 1.1 of \cite{M} also hold true under certain non-convex perturbations and also with perforated domains (see Section \ref{sec:4}, Propositions \ref{cor:Melas} and \ref{cor:del_hol_cap} below). As another example of an application: \cite{J} proves that the diameter of the nodal set of the second Dirichlet eigenfunction of a convex planar domain $\Omega$ is $\leq C \text{inradius }\Omega,$ where $C$ is an absolute constant. The methods of the proof also illustrates that this nodal line lies near the middle of $R$, where $R \subset \Omega$ is an inscribed rectangle with the least Dirichlet eigenvalue $\lambda_1(R)$. Our methods will also extend such results to domains with small enough non-convex perturbations. Lastly, a further modification of the above arguments should also give a perturbed version of Corollary 2.2 of \cite{Fr}, though we do not carry this out.
\section{Proof of Theorem \ref{thm:main_thm}}\label{sec:3}
\subsection{Preliminary results}
Before beginning with the proof proper, we outline some preliminaries needed for the proof.

In Theorem 1.1 of \cite{T}, it is shown that: 
\begin{theorem}[Takahashi]\label{thm:Tak_1.1}
For all $k = 0, 1, \cdots,$ we have 
\begin{equation}
    \lim_{\epsilon \to 0} \lambda_k(M, g_{\epsilon}) = \lambda_k(M_1, g_1).
\end{equation}
\end{theorem}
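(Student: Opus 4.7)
The plan is to prove both $\limsup_{\epsilon \to 0} \lambda_k(M, g_\epsilon) \leq \lambda_k(M_1, g_1)$ and the matching lower bound, via the min-max characterization of Laplace eigenvalues. The guiding heuristic is that scaling the $M_2$ piece by $\epsilon^2$ forces its intrinsic Laplace spectrum to scale like $\epsilon^{-2}$, so the low-lying eigenmodes of $(M, g_\epsilon)$ are forced to live essentially on the $M_1$ side, and as $\epsilon \to 0$ they should reconstruct eigenmodes of $(M_1, g_1)$.

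For the upper bound, I would take eigenfunctions $\varphi_0, \dots, \varphi_k$ on $(M_1, g_1)$ and manufacture test functions on $(M, g_\epsilon)$. Pick a radial cutoff $\chi_\epsilon$ on $M_1$ supported in $M_1(\epsilon)$ and identically $1$ outside $B(x_1, \delta_\epsilon)$ for some $\delta_\epsilon \downarrow 0$ (a logarithmic cutoff in dimension $n=2$, a power cutoff in $n \geq 3$), so that the standard capacity estimate yields $\int_{M_1} |\nabla \chi_\epsilon|^2 \, dV_{g_1} \to 0$. Set $\psi_j^\epsilon := \chi_\epsilon \varphi_j$, extended by zero across $\pa M_1(\epsilon)$ into $M_2(1)$; this gives a compactly supported Lipschitz function on $(M, g_\epsilon)$. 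A direct computation combined with the smallness of $\|\nabla \chi_\epsilon\|_2$ shows the Rayleigh quotients $R_{g_\epsilon}(\psi_j^\epsilon)$ converge to $\lambda_j(M_1, g_1)$, and the Gram matrix of $\{\psi_j^\epsilon\}_{j=0}^k$ converges to the identity. Min-max then yields $\limsup_{\epsilon \to 0} \lambda_k(M, g_\epsilon) \leq \lambda_k(M_1, g_1)$.

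For the lower bound I would work with unit-norm eigenfunctions $u_j^\epsilon$ of $(M, g_\epsilon)$, $0 \leq j \leq k$. The upper bound gives $\lambda_j(M, g_\epsilon) \leq C$, so $\{u_j^\epsilon\}$ is uniformly bounded in $H^1(M, g_\epsilon)$. The decisive observation is that no $L^2$-mass concentrates on the scaled piece: the first non-zero Neumann eigenvalue of $(M_2(1), \epsilon^2 g_2)$ equals $\epsilon^{-2} \mu_1^N(M_2(1), g_2)$, so a Poincar\'e inequality controls $\int_{M_2(1)} (u_j^\epsilon - \bar u_j^\epsilon)^2 \, dV_{\epsilon^2 g_2}$ by $O(\epsilon^2)$. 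Combined with a trace argument on the interface $\pa M_1(\epsilon)$ and $\Vol_{\epsilon^2 g_2}(M_2(1)) = O(\epsilon^n)$, this forces both the mass and the Dirichlet energy of $u_j^\epsilon$ over the $M_2$ piece to vanish, so the restrictions $v_j^\epsilon := u_j^\epsilon|_{M_1(\epsilon)}$ satisfy $\|v_j^\epsilon\|_{L^2(M_1(\epsilon), g_1)} \to 1$ with Dirichlet energies converging to $\lambda_j^* := \lim \lambda_j(M, g_\epsilon)$ along a subsequence.

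To finish, I would use that $B(x_1, \epsilon)$ has vanishing $H^1$-capacity as $\epsilon \to 0$ to extend $v_j^\epsilon$ across the removed ball (either by harmonic extension matching the trace on $\pa B(x_1, \epsilon)$, or by gluing a capacity-cutoff multiple of the boundary average) to obtain $\tilde v_j^\epsilon \in H^1(M_1)$ with $\|\tilde v_j^\epsilon - v_j^\epsilon\|_{H^1(M_1(\epsilon))} \to 0$. Passing to a further subsequence, $\tilde v_j^\epsilon \rightharpoonup u_j^*$ weakly in $H^1(M_1)$ and strongly in $L^2(M_1)$; testing against $\chi_\eta \varphi$ for any smooth $\varphi$ and then sending $\eta \to 0$ shows the limits satisfy $-\Delta_{g_1} u_j^* = \lambda_j^* u_j^*$ weakly and remain orthonormal. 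The existence of $k+1$ orthonormal eigenfunctions with eigenvalues at most $\lambda_k^*$ forces $\lambda_k(M_1, g_1) \leq \lambda_k^*$ by min-max, yielding the lower bound. The principal obstacle is the preceding paragraph: extracting the uniform Poincar\'e and capacity estimates needed to show no mass leaks onto the shrinking piece, and extending across $B(x_1, \epsilon)$ while keeping quantitative $H^1$-control, so that the limiting eigenvalue equation can be read off from weak convergence.
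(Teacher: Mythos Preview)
The paper does not actually prove this theorem: it is quoted verbatim as Theorem~1.1 of \cite{T} (Takahashi), with no proof supplied here. So there is no ``paper's own proof'' to compare against; the authors simply invoke Takahashi's result as a black box.

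That said, your outline is essentially the strategy Takahashi uses in \cite{T} (itself inspired by earlier work of Ann\'e and Colbois). The upper bound via capacity cutoffs transplanted from $(M_1, g_1)$ is exactly what Takahashi does. For the lower bound, Takahashi likewise shows that the $L^2$-mass and Dirichlet energy of $u_j^\epsilon$ on the scaled piece $(M_2(1), \epsilon^2 g_2)$ tend to zero, then extends the restrictions across $B(x_1,\epsilon)$ using the vanishing-capacity argument, and extracts weak $H^1$ limits that solve the limiting eigenequation while remaining orthonormal. The one place where you should be a bit more careful is the ``no mass leaks'' step: the Neumann--Poincar\'e inequality controls only the deviation from the mean $\bar u_j^\epsilon$ on $M_2(1)$, so you still need an independent argument (trace bound on $\partial M_1(\epsilon)$ plus $\Vol_{\epsilon^2 g_2}(M_2(1)) = O(\epsilon^n)$, as you indicate) to show the mean itself does not carry order-one mass. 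Takahashi handles this via a careful trace/extension estimate on the gluing sphere; your sketch gestures at the right ingredients but would need to be made quantitative to close the argument.
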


We also introduce the following notation: let $f_k^\epsilon$ denote the $k$th Laplace eigenfunction of $M_\epsilon$ and $f_k$ denote the $k$th Laplace eigenfunction of $M_1$. With that in place, we quote the following result, which is a combination of Theorem 3.4 and Corollary 3.5 of \cite{K} (see also \cite{BU}). 
\begin{theorem}[Komendarczyk]\label{theo: eigenfunction convergence}
For each $k$, we have  
the following $C^\infty-$convergence of eigenfunctions $f_k^{\epsilon}\in C^{\infty}(M_{\epsilon})$:
\begin{equation}\label{eq:3.2}
    \lim_{\epsilon\to 0} f_k^{\epsilon}= f_k \hspace{10pt} \text{on compact subsets of } M_1\setminus \{x_1\}
\end{equation}
where all the eigenfunctions involved are $L^2$-normalized,
$$
\|f_k^{\epsilon}\|_{L^2(M_{\epsilon})} = \|f_k \|_{L^2(M_1)} = 1.
$$
\end{theorem}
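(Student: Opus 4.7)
The plan is to combine Takahashi's eigenvalue convergence (Theorem \ref{thm:Tak_1.1}) with interior elliptic regularity and a compactness argument. First, fix a compact set $K \Subset M_1 \setminus \{x_1\}$; for $\epsilon$ small enough, $K \subset M_1(\epsilon/2)$, and by construction $g_\epsilon \equiv g_1$ on a neighborhood of $K$. Consequently $f_k^\epsilon$ solves the \emph{fixed} elliptic equation $-\Delta_{g_1} f_k^\epsilon = \lambda_k(M_\epsilon) f_k^\epsilon$ on that neighborhood, with $\lambda_k(M_\epsilon)$ uniformly bounded and $\|f_k^\epsilon\|_{L^2(M_\epsilon)} = 1$. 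A standard Sobolev bootstrap then delivers uniform $C^m(K)$ bounds for every $m \in \NN$.

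Second, I would exhaust $M_1 \setminus \{x_1\}$ by an increasing sequence of compacta and apply Arzel\`a--Ascoli together with a diagonal extraction to produce a subsequence $f_k^{\epsilon_j}$ converging in $C^\infty_{\mathrm{loc}}(M_1 \setminus \{x_1\})$ to some limit $f$. Passing to the limit in the PDE yields $-\Delta_{g_1} f = \lambda_k(M_1) f$ on $M_1 \setminus \{x_1\}$. Since $\{x_1\}$ has zero $H^1$-capacity when $n \geq 2$, as soon as one knows $f \in L^2_{\mathrm{loc}}$ near $x_1$ the singularity is removable, and $f$ extends to a smooth eigenfunction of $-\Delta_{g_1}$ on all of $M_1$ associated with $\lambda_k(M_1)$.

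The main obstacle is ruling out $L^2$-mass loss: one needs $\|f\|_{L^2(M_1)} = 1$, which requires $\int_{M_2(\epsilon)} |f_k^\epsilon|^2 \, d\mathrm{vol}_{g_\epsilon} \to 0$ together with no concentration at the junction. Rescaling $M_2(\epsilon)$ by $\epsilon^{-1}$ back to the fixed manifold $(M_2(1), g_2)$, the pullback $\tilde f_k^\epsilon$ satisfies $-\Delta_{g_2} \tilde f_k^\epsilon = \epsilon^2 \lambda_k(M_\epsilon) \tilde f_k^\epsilon \to 0$ with boundary data on $\partial M_2(1)$ converging, via the $C^\infty_{\mathrm{loc}}$ convergence on the $M_1$ side, to the constant $f(x_1)$. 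Thus the $\tilde f_k^\epsilon$ limit to a harmonic function on $M_2(1)$ with constant boundary values, hence a constant, and uniform $L^\infty$ bounds give $\int_{M_2(\epsilon)} |f_k^\epsilon|^2 \, d\mathrm{vol}_{g_\epsilon} = \epsilon^n \int_{M_2(1)} |\tilde f_k^\epsilon|^2 \, d\mathrm{vol}_{g_2} = O(\epsilon^n) \to 0$. This fixes the normalization $\|f\|_{L^2(M_1)} = 1$ and identifies $f$ as an element of the $\lambda_k(M_1)$-eigenspace. When $\lambda_k(M_1)$ has multiplicity, I would run the extraction jointly on $f_0^\epsilon, \ldots, f_k^\epsilon$: the relations $\langle f_i^\epsilon, f_j^\epsilon\rangle_{L^2(M_\epsilon)} = \delta_{ij}$ pass to the limit, and dimension counting within each eigenspace pins down the full sequence (not just a subsequence), yielding genuine convergence to an ordered orthonormal basis $f_0, \ldots, f_k$ as desired. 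The delicate technical point throughout is the quantitative control on the neck and on the collapsing piece $M_2(\epsilon)$, which is precisely what Takahashi's Rayleigh-quotient machinery, refined in \cite{K}, is designed to supply.
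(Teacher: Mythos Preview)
The paper does not supply its own proof of this theorem; it quotes the result as a combination of Theorem~3.4 and Corollary~3.5 of \cite{K} and adds only a short paragraph explaining how to drop Komendarczyk's simplicity hypothesis by invoking pages 206--207 of \cite{T} to show that orthonormality of the family $\{f_k^\epsilon\}$ is preserved in the limit. Your outline is a faithful reconstruction of the strategy behind those references: interior elliptic bounds, Arzel\`a--Ascoli with diagonal extraction, removable singularity at $x_1$, and an orthonormality/dimension count to handle multiplicity --- this last point being exactly the content of the paper's added remark.

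One caution on your mass-loss paragraph: the claim that the boundary data on $\partial M_2(1)$ converges to the constant $f(x_1)$ ``via the $C^\infty_{\mathrm{loc}}$ convergence on the $M_1$ side'' is not justified as stated, since $\partial M_1(\epsilon)$ shrinks to $x_1$ and never lies in a fixed compact subset of $M_1\setminus\{x_1\}$; likewise the ``uniform $L^\infty$ bounds'' on $\tilde f_k^\epsilon$ are asserted rather than derived. You do flag at the end that the quantitative neck control is the delicate input supplied by the Rayleigh-quotient machinery of \cite{T} and \cite{K}, so the gap is acknowledged rather than overlooked --- but as written the middle paragraph is circular, and a reader would need to go to those references (as the paper itself does) to close it.
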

It should be noted that \cite{K} proves Theorem \ref{theo: eigenfunction convergence} assuming that the Laplace spectrum of $M_1$ is simple, which is a generic condition. However, a close inspection of the proof in \cite{K} together with the ideas in the proof of Theorem 1.1 of \cite{T} reveals that Komerdarczyk's result will continue to hold even when the Laplace spectrum of $M_1$ is not simple. In particular, we refer to ideas on pages 206-207 of \cite{T}, which can be easily modified to our setting to prove that an orthonormal family $f^\epsilon_k$ of converging eigenfunctions continues to remain orthonormal in the limit, that is, if  $\lim_{\epsilon\to 0} f_k^{\epsilon}= f_k $, then $f_k$ is still an orthonormal family. That, coupled with the convergence of spectrum given by Theorem 1.1 of \cite{T}, proves our claim.


We also state the following well-known property of nodal sets, but we also include a proof for the sake of completeness.
\begin{proposition}\label{thm:nod_wav_dense}
If $\varphi$ satisfies $-\Delta \varphi = \lambda \varphi$ on a compact manifold (with or without boundary) $M$, then the nodal set $N_\varphi$ is asymptotically wavelength dense. More formally, there exists a positive constant $C = C(M, g)$ and $\lambda_0 = \lambda_0(M, g)$ such that any ball $B(x, C\lambda^{-1/2})$ intersects $N_\varphi$ for all $\lambda \geq \lambda_0$. 
\end{proposition}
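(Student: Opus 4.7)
The plan is a standard domain-monotonicity plus Faber--Krahn-type argument, adapted to the Riemannian setting. Suppose, for contradiction, that there exists a geodesic ball $B = B(x_0, r)$ with $r = C\lambda^{-1/2}$ which does not meet $\NNN_\varphi$. (If $M$ has boundary and Dirichlet conditions are imposed, then $\partial M \subseteq \NNN_\varphi$, so any ball meeting $\partial M$ already intersects the nodal set; hence we may assume $\overline{B} \subseteq M \setminus \partial M$.) Since $\varphi$ is continuous and vanishes nowhere on $\overline{B}$, it has constant sign there, so $B$ is contained in a single nodal domain $D$ with $\varphi|_D > 0$ (after relabelling sign) and $\varphi|_{\partial D} = 0$.

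The key observation is then that $\varphi|_D$ is a positive Dirichlet eigenfunction on $D$, so $\lambda = \lambda_1^D(D)$. Domain monotonicity of the Dirichlet spectrum (applied to the inclusion $B \subseteq D$) yields
\[
\lambda \;=\; \lambda_1^D(D) \;\leq\; \lambda_1^D(B(x_0,r)).
\]
Next I would produce an upper bound of the form $\lambda_1^D(B(x_0,r)) \leq C'(M,g)/r^2$ by a standard test-function Rayleigh quotient: for $r$ below the injectivity radius, exponential normal coordinates identify $B(x_0,r)$ with a near-Euclidean ball, and a radial cut-off $\eta$ with $\eta \equiv 1$ on $B(x_0, r/2)$, $\eta \equiv 0$ off $B(x_0, r)$, and $|\nabla \eta| \lesssim 1/r$, gives
\[
\lambda_1^D(B(x_0,r)) \;\leq\; \frac{\int_{B}|\nabla \eta|^2 \,dv_g}{\int_B \eta^2 \,dv_g} \;\lesssim_{(M,g)}\; \frac{1}{r^2}.
\]
Combining the two inequalities produces $\lambda \lesssim_{(M,g)} 1/r^2$, i.e.\ $r \lesssim_{(M,g)} \lambda^{-1/2}$, which contradicts $r = C\lambda^{-1/2}$ once $C = C(M,g)$ is chosen sufficiently large.

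Two mildly technical points deserve care. First, the upper bound $\lambda_1^D(B(x_0,r)) \lesssim 1/r^2$ has to be uniform in $x_0 \in M$ with a constant depending only on $(M,g)$; this is where one uses compactness of $M$ together with a positive lower bound on the injectivity radius and a uniform two-sided bound on the volume density of geodesic balls (both standard for compact $(M,g)$). The hypothesis $\lambda \geq \lambda_0(M,g)$ enters precisely to guarantee that the required ball radius $C\lambda^{-1/2}$ lies below the injectivity radius so that the test-function construction is valid. Second, the case in which $D$ is only a nodal ``domain'' in the measure-theoretic sense (e.g.\ if one worried about irregularity of $\partial D$) is not actually a problem, because the argument only uses the inclusion $B \subseteq D$ and the variational characterization of $\lambda$ as the infimum of the Rayleigh quotient over $H_0^1(D)$. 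I expect the main (very mild) obstacle to be merely bookkeeping the uniform geometric constants, not any conceptual difficulty.
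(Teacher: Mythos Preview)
Your argument is correct, and it is the other standard route to this fact. The paper proceeds differently: it lifts $\varphi$ to the harmonic function $u(t,x) = e^{\sqrt{\lambda}\,t}\varphi(x)$ on $[-1,1]\times M$, applies the Harnack inequality on a product cylinder $[-1,1]\times B(y,R)$ where $\varphi$ is assumed not to vanish, and then observes that the ratio $u(t,x)/u(-t,x)=e^{2\sqrt{\lambda}\,t}$ can be made to exceed any fixed Harnack constant, forcing a zero of $\varphi$ in $B(y,C\lambda^{-1/2})$. Your approach instead uses the variational characterisation of $\lambda$ as $\lambda_1^D(D)$ on the nodal domain $D$, domain monotonicity under $B\subseteq D$, and a Rayleigh-quotient test function on the ball. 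Both arguments require the same uniform geometric input (positive injectivity radius and controlled volume density, guaranteed by compactness) to make the constants independent of the basepoint; your bookkeeping of this through $\lambda_0(M,g)$ is exactly right. The Harnack route is slightly slicker in that it avoids explicitly building a test function, while your Faber--Krahn/monotonicity route is arguably more elementary and makes the role of $\lambda_1$ of a nodal domain more transparent.
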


\begin{proof}
Observe that $u(t, x) := e^{\sqrt{\lambda} t}\varphi (x)$ is a harmonic function in $[-1, 1] \times M$. If there is a ball $B(y, R) \subset M$ where $\varphi$ does not vanish, by Harnack inequality, there exists a constant $C := C(R, M, g)$ such that $u(t_1, x) \leq C u(t_2, z)$ for all $(t_1, x), (t_2, z) \in [-1, 1] \times B(y, R)$. Since $u(t, x)/u(-t, x) = e^{2\sqrt{\lambda}t}$, choose $t = \lambda^{-1/2}\log C_1$, such that $C_1^2 > C_2$. This violates the previous assumption, and shows that $B(y, C\lambda^{-1/2})$ contains a nodal point. 
\end{proof}

Before we begin the proof of Theorem \ref{thm:main_thm}, we first comment that though Takahashi proves Theorem \ref{thm:Tak_1.1} for closed manifolds, it can be checked such a result is also true for compact Euclidean domains with 
smooth boundary, with only the obvious modifications in the proof. The same remark applies to Theorem \ref{theo: eigenfunction convergence}. In other words, consider the one-parameter family of Euclidean domains $\Omega_\epsilon$ formed via the process 
described below Corollary \ref{cor:Lewy}. Then, the following variant of  Theorems \ref{thm:Tak_1.1} and \ref{theo: eigenfunction convergence} is true. 
\begin{theorem}\label{thm:dom_eigen_conv}
For all $k = 0, 1, \cdots,$ we have 
\begin{equation}\label{eq:eigenval_conv}
    \lim_{\epsilon \to 0} \lambda_k(\Omega_\epsilon) = \lambda_k(\Omega).
\end{equation}
For each $k$, we have  
the following $C^\infty-$convergence of eigenfunctions $f_k^{\epsilon}\in C^{\infty}(\Omega_\epsilon)$:
\begin{equation}
    \lim_{\epsilon\to 0} f_k^{\epsilon}= f_k \hspace{10pt} \text{on compact subsets of } \Omega\setminus \{x_1\}
\end{equation}
where all the eigenfunctions involved are $L^2$-normalized,
$$
\|f_k^{\epsilon}\|_{L^2(\Omega_{\epsilon})} = \|f_k \|_{L^2(\Omega)} = 1.
$$
\end{theorem}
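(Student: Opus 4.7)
The plan is to replay Takahashi's proof of Theorem~\ref{thm:Tak_1.1} and Komendarczyk's proof of Theorem~\ref{theo: eigenfunction convergence} in the Dirichlet/Euclidean setting, keeping track only of the places where the presence of $\partial\Omega$ and the $\epsilon^{-2}$-scaling of the attached piece $\Omega_2^\epsilon$ play a role. The argument splits cleanly into (i) eigenvalue convergence~(\ref{eq:eigenval_conv}) by min-max, and (ii) eigenfunction convergence by compactness plus interior elliptic regularity.

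For the upper bound in (i), I would take the first $k+1$ Dirichlet eigenfunctions $f_0,\dots,f_k$ of $\Omega$, multiply each by a radial cutoff $\chi_\epsilon$ vanishing on $B(x_1,2\epsilon)$ and equal to $1$ outside $B(x_1,4\epsilon)$, and extend by zero to obtain admissible elements of $H^1_0(\Omega_\epsilon)$ supported inside $\Omega_1^\epsilon$. Since the Newtonian capacity of a Euclidean ball of radius $\epsilon$ tends to $0$ as $\epsilon\to 0$ for $n\geq 2$, the Rayleigh quotient of $\chi_\epsilon f_j$ differs from $\lambda_j(\Omega)$ by $o(1)$, and these cutoffs remain almost orthonormal in $L^2$; min-max then yields $\limsup_{\epsilon\to 0}\lambda_k(\Omega_\epsilon)\leq\lambda_k(\Omega)$. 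For the lower bound, the Dirichlet spectrum of $\Omega_2^\epsilon$ equals $\epsilon^{-2}$ times that of $\Omega_2$, so testing the Rayleigh quotient of $f_k^\epsilon$ restricted to $\Omega_2^\epsilon$ against $\lambda_1(\Omega_2)$ forces $\|f_k^\epsilon\|^2_{L^2(\Omega_2^\epsilon)}=O(\epsilon^2)$. Combined with the classical capacity fact that $\lim_{\epsilon\to 0}\lambda_j(\Omega\setminus B(x_1,\epsilon))=\lambda_j(\Omega)$, this gives $\liminf_{\epsilon\to 0}\lambda_k(\Omega_\epsilon)\geq\lambda_k(\Omega)$.

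For (ii), the bound $\|\nabla f_k^\epsilon\|^2_{L^2(\Omega_\epsilon)}=\lambda_k(\Omega_\epsilon)\leq C$ together with $L^2$-normalization gives a uniform $H^1$-bound on $\{f_k^\epsilon\}$ restricted to any fixed compact $K\Subset\Omega\setminus\{x_1\}$. A diagonal extraction plus interior elliptic regularity upgrades weak $H^1$-convergence to $C^\infty$-convergence on compact subsets of $\Omega\setminus\{x_1\}$, and the limit $\tilde f_k$ satisfies $-\Delta\tilde f_k=\lambda_k(\Omega)\tilde f_k$ and vanishes on $\partial\Omega$. The $O(\epsilon^2)$ mass bound on $\Omega_2^\epsilon$ together with $|B(x_1,4\epsilon)|=O(\epsilon^n)$ guarantees $\|\tilde f_k\|_{L^2(\Omega)}=1$, so the limit is nontrivial; the orthogonality relations pass to the limit as on pp.~206--207 of \cite{T}, so that even when $\lambda_k(\Omega)$ has higher multiplicity the limiting family remains orthonormal and exhausts the relevant eigenspace.

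The main obstacle --- and essentially the only place the argument departs from the closed Riemannian case --- is the $L^2$-bookkeeping needed to rule out concentration of eigenfunction mass either into the shrinking attachment $\Omega_2^\epsilon$ or into the vanishing neck $\Omega\cap B(x_1,\epsilon)$ as $\epsilon\to 0$. Both losses are controlled by the same two ingredients, namely the $\epsilon^{-2}$-blowup of eigenvalues on $\Omega_2^\epsilon$ and the vanishing Newtonian capacity of a Euclidean ball of radius $\epsilon$ in dimension $n\geq 2$; once these are in hand, every other step is a routine modification of \cite{T} and \cite{K}.
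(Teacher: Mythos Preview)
Your proposal is correct and follows precisely the approach the paper itself indicates: the paper gives no independent proof of Theorem~\ref{thm:dom_eigen_conv}, stating only that ``the proofs given in \cite{T} and \cite{K} would work in this setting with only the obvious modifications'' and skipping the details, which is exactly what you have supplied. One minor imprecision: when you bound $\|f_k^\epsilon\|_{L^2(\Omega_2^\epsilon)}^2$ by testing against $\lambda_1(\Omega_2)$, the restriction $f_k^\epsilon|_{\Omega_2^\epsilon}$ does not vanish on the attaching interface, so the relevant constant is the first mixed Dirichlet--Neumann eigenvalue of $\Omega_2$ (Dirichlet on $\partial\Omega_2\cap\partial\Omega_\epsilon$, free on the interface) rather than the full Dirichlet $\lambda_1(\Omega_2)$; this still scales as $\epsilon^{-2}$ and the $O(\epsilon^2)$ conclusion is unchanged.
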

One can check that the proofs given in \cite{T} and \cite{K} would work in this setting with only the obvious modifications. We skip the details to avoid repetition.

\subsection{Proof of Theorem \ref{thm:main_thm}.}
Given a Laplace eigenfunction $f_l$ (respectively, $f^\epsilon_l$), we use $\NNN_l(M_1)$ (respectively, $\NNN_l (M_\epsilon)$) to denote the respective nodal sets (sometimes, when it is clear from the context, we will suppress the index $l$ with minor abuse of notation). We assume that the nodal set $\NNN(M_1)$ is at a geodesic distance $d>2\epsilon_0$ from the gluing ball $B(x_1, \epsilon) \subset B(x_1, \epsilon_0)$ where $2\epsilon_0$ is a positive number chosen such that it is smaller than the injectivity radius at the point $x_1$. The nodal sets $\NNN(M_1)$ and $\NNN(M_{\epsilon})$ can be compared only on the common subset $M_1(\epsilon_0)$. 


What we want to prove is that for some $\epsilon>0$ the nodal set $\NNN(M_{\epsilon})$ belongs entirely inside the common domain $M_1(\epsilon_0)$. This would give the existence of a smooth metric on $M_{\epsilon}$ whose nodal domain is completely contained in $M_1$.


Given a number $m$, for each $k \leq m$ we will now choose a point $x_1 \in M_1$ (depending on $k$) such that $B(x_1, 2\epsilon_0)$ does not intersect the nodal set $\NNN_k(M_1)$ for the eigenfunctions {$ f_k$}. We first discuss how such an $\epsilon_0$ can be chosen depending only on $m$, and in particular, independently of $k$. Let $\Omega_{\lambda_l}$ be a nodal domain of an eigenfunction $\varphi_{\lambda_l}$ corresponding to the eigenvalue $\lambda_l$. Then, for all $l \in \NN$, a geodesic ball of radius $c\lambda_l^{- \frac{n - 1}{4} - \frac{1}{2n}}$ can be completely embedded inside $\Omega_{\lambda_l}$, where $c$ is a constant dependent only on $(M_1, g_1)$ (see \cite{Ma}, and further refinements in \cite{GM, GM1}). If the manifold has a real analytic metric or is $2$-dimensional, the above estimate can be further improved (see \cite{G}, \cite{Ma1}). Thus, it is clear that given a natural number $m$, one can choose $x_1 \in M_1$ and $\epsilon_0 := \epsilon_0(m)$ small enough such that $B(x_1, 3\epsilon_0)$ does not intersect the nodal set for any $f_l, l\leq m$. 

Another remark that is of utmost importance: it is a generic condition that Laplace eigenfunctions occur with multiplicity $1$. In that case, everything we have said in the above paragraph goes through. An issue appears, however, when multiple eigenfunctions exist for the same eigenvalue. In that case, even after fixing $k$, it is unclear that one is able to choose a point $x_1$ that avoids the nodal sets of all the linear combinations of all the  eigenfunctions corresponding to the eigenvalue $\lambda_k$. In fact, one is inclined to speculate that when one varies across all the linear combinations, the corresponding nodal sets might ``sweep out'' the whole of $M$\footnote{Here we have used the word ``sweepout'' in an intuitive sense. However, for linear combinations of eigenfunctions (not necessarily for same eigenvalue), a more precise mathematical formulation exist for this intuitive picture. See \cite{BBH} for such a formulation.}. However, even then the statement of Theorem \ref{thm:main_thm} remains true by varying the point $x_1$ (even when $k$ is fixed). This is because the connected sum of two connected oriented manifolds is well-defined, independent of the choice of the balls along whose boundary the identification is made. More formally, the  definition of the connected sum of two oriented $n$-manifolds $M$ and $N$  begins by considering two open $n$-balls $B_M$ in $M$, $B_N$ in $N$, and gluing the manifolds $M\setminus B_M$ and $N \setminus B_N$ along their boundary (an $(n-1)$-sphere) by an orientation-reversing homeomorphism.  The construction should depend a priori on the choice of these balls, but it turns out that it does not. For surfaces, as is the case in \cite{K}, this can be checked by hand, but for higher dimensions this is a deep topological result, and we refer the reader to Theorem 5.5  of \cite{P} for the differentiable category. This is fine with us, as the statement of Theorem \ref{thm:main_thm} makes no reference to the point $x_1$. 

Now the choice of $B(x_1, \epsilon_0)$ seems pretty clear. 
Given an eigenfunction $f_k$ ($k \leq m$), choose $x_1$ which is located ``deeply inside'' some nodal domain $\Omega_k$, and choose $\epsilon_0 = \min \{.3c\lambda_m^{-\frac{n - 1}{4} - \frac{1}{2n}}, \text{inj}(x_1)\}$ (inj($x_1$) denotes the injectivity radius at $x_1$), such that the ball $B(x_1, \epsilon_0)$ is completely contained inside the nodal domain $\Omega_k$. 
The connected sums we obtain by choosing $x_1$ and $\epsilon_0$ 
are all diffeomorphic and the corresponding Riemannian manifolds obtained via the construction in (\ref{eq:Riem_met_def}) are isometric. 

We now continue our proof with the following

\begin{lemma}\label{lem:lim_M_1}
Let $\NNN_k(M_{\epsilon})$ and $\NNN_k(M_1)$ denote the nodal sets corresponding to 
$f^\epsilon_k$ and $f_k$ respectively. Consider a sequence of points $\{x_i\}$ such that for each $i$, $x_i\in \NNN_k(M_{\epsilon_i})\cap M_1(\epsilon_0)$. If the limit $x$ of $\{x_i\}$ exists, then $x\in \NNN_k(M_1)$.
\end{lemma}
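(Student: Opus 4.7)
The plan is a direct continuity argument built on top of the $C^\infty$-convergence statement in Theorem \ref{theo: eigenfunction convergence}. The key observation is that the nodal points $x_i$ are forced to stay uniformly away from the gluing point $x_1$ by hypothesis, so they lie in a region where the spectral convergence is valid.

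First, I would note that since each $x_i \in M_1(\epsilon_0) = M_1 \setminus B(x_1, \epsilon_0)$ and this set is closed in $M_1$, the limit point $x$ also satisfies $\dist(x, x_1) \geq \epsilon_0 > 0$. Hence there is a compact neighborhood $K$ of $x$ contained in $M_1 \setminus \{x_1\}$ that contains all but finitely many of the $x_i$.

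Next, I would invoke Theorem \ref{theo: eigenfunction convergence}, which in particular gives uniform convergence $f_k^{\epsilon_i} \to f_k$ on $K$. I would then estimate
\begin{equation*}
    |f_k(x)| \leq |f_k(x) - f_k(x_i)| + |f_k(x_i) - f_k^{\epsilon_i}(x_i)| + |f_k^{\epsilon_i}(x_i)|.
\end{equation*}
The first term tends to zero by continuity of $f_k$ together with $x_i \to x$, the second is bounded by $\sup_K |f_k - f_k^{\epsilon_i}|$ which tends to zero by the uniform convergence on $K$, and the third vanishes identically since $x_i \in \NNN_k(M_{\epsilon_i})$. Passing to the limit gives $f_k(x) = 0$, i.e.\ $x \in \NNN_k(M_1)$.

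There is no real obstacle here; the only subtle point is ensuring that the $x_i$ stay in the common region $M_1 \setminus \{x_1\}$ where Theorem \ref{theo: eigenfunction convergence} applies, which is exactly guaranteed by the hypothesis $x_i \in M_1(\epsilon_0)$ (the reason $\epsilon_0$ was introduced in the first place was precisely to provide such a safety buffer around the gluing point).
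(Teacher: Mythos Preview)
Your proof is correct and follows essentially the same approach as the paper: apply the $C^0$-convergence from Theorem~\ref{theo: eigenfunction convergence} on the compact set $M_1(\epsilon_0)$, use that $f_k^{\epsilon_i}(x_i)=0$, and conclude via continuity of $f_k$. The only cosmetic difference is that the paper works directly on $M_1(\epsilon_0)$ rather than introducing an auxiliary compact neighborhood $K$, but the argument is the same.
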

\begin{proof}
Recall that $f_k$ and $f_k^{\epsilon_i}$ are the $k^{th}$ eigenfunctions of the Laplace-Beltrami operators of $M_1$ and $M_{\epsilon_i}$ respectively, and 
following further notations from \cite{T, K}, we define 
$$
f_k^{1,\epsilon_i}:= f_k^{\epsilon_i}|_{M_1(\epsilon)}, f_k^{2,\epsilon_i}:= f_k^{\epsilon_i}|_{M_2(\epsilon)}.$$
 Using Theorem \ref{theo: eigenfunction convergence}, we apply the convergence  
 $$f_k^{\epsilon_i}|_{M_1(\epsilon_0)}\to f_k|_{M_1(\epsilon_0)} \text{ in  } C^0 (M_1(\epsilon_0))$$
 to obtain,
 $$|f_k(x_i)|= |f_k(x_i)- f_k^{1,\epsilon_i}(x_i)|\leq \|f_k- f_k^{1, \epsilon_i}\|_{C^0 (M_1(\epsilon_0))} \to 0 \hspace{5pt} \text{ as } i\to \infty.$$
  Now, $f_k$ being a continuous function, $x_i\to x$ implies $f_k(x_i)\to f_k(x)$. Therefore, $f_k(x)=0$ i.e. $x\in \NNN_k(M_1)$. 
\end{proof}

Now, we come to our next claim:
\begin{lemma}\label{lem:nod_set_eventually}
The nodal sets $\NNN_k(M_{\epsilon_i})$ are eventually in $M_1(\epsilon_0)$ i.e., we can find an index $p$ such that for all $i>p$, $\NNN_k(M_{\epsilon_i}) \subset M_1({\epsilon_0})$.
\end{lemma}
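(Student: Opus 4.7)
The plan is to argue by contradiction. If the conclusion fails, then passing to a subsequence one obtains points $x_i \in \NNN_k(M_{\epsilon_i}) \setminus M_1(\epsilon_0)$, each of which must lie in the shrinking region $(B(x_1, \epsilon_0) \cap M_1(\epsilon_i)) \cup M_2(\epsilon_i)$. I will split into two regimes: a \emph{macroscopic} one in which $x_i \in M_1$ with $\dist_{g_1}(x_i, x_1) \geq \delta$ for some fixed $\delta > 0$ along a further subsequence, and a \emph{microscopic} one in which either $x_i \in M_2(\epsilon_i)$ or $\dist_{g_1}(x_i, x_1) \to 0$.

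The macroscopic regime is disposed of directly by Theorem \ref{theo: eigenfunction convergence}. On the compact set $K_\delta := \{x \in M_1 : \delta \leq \dist_{g_1}(x, x_1) \leq \epsilon_0\} \subset M_1 \setminus \{x_1\}$ one has $f_k \geq \alpha > 0$, since our choice of $x_1$ and $\epsilon_0$ places $\overline{B(x_1, 2\epsilon_0)}$ strictly inside a nodal domain of $f_k$. Uniform convergence $f_k^{\epsilon_i} \to f_k$ on $K_\delta$ then yields $f_k^{\epsilon_i} \geq \alpha/2 > 0$ on $K_\delta$ for $i$ large, contradicting $f_k^{\epsilon_i}(x_i) = 0$.

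For the microscopic regime I rescale by $\epsilon_i^{-1}$: in normal coordinates $y = (x - x_1)/\epsilon_i$ around $x_1$, the rescaled manifold $(M_{\epsilon_i}, \epsilon_i^{-2} g_{\epsilon_i})$ converges smoothly on compacts to the glued space $\hat M := (\RR^n \setminus B(0, 1)) \cup_{\operatorname{id}} (M_2(1), g_2)$, identified along the unit sphere. The rescaled eigenfunction $\tilde f_i(y) := f_k^{\epsilon_i}(x_1 + \epsilon_i y)$ satisfies $-\Delta_{\tilde g_i} \tilde f_i = \epsilon_i^2 \lambda_k(M_{\epsilon_i}) \tilde f_i$, whose right-hand side vanishes in the limit thanks to Theorem \ref{thm:Tak_1.1}. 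The uniform $L^\infty$-bound on $L^2$-normalized eigenfunctions with bounded eigenvalue (from standard elliptic regularity), combined with interior elliptic estimates, yields uniform $C^\ell_{loc}$-bounds on $\tilde f_i$; by Arzel\`a--Ascoli a subsequence of $\tilde f_i$ converges in $C^\infty_{loc}(\hat M)$ to a bounded harmonic function $\tilde f$. Combining the $C^\infty$-convergence of $f_k^{\epsilon_i}$ on fixed spheres $\{|x| = \rho\} \subset M_1$ for arbitrarily small $\rho$ with the continuity of $f_k$ at $x_1$, a diagonal argument forces $\tilde f(y) \to f_k(x_1)$ as $|y| \to \infty$ on the Euclidean end. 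Then $u := \tilde f - f_k(x_1)$ is bounded harmonic on $\hat M$ with vanishing far field, and the strong maximum principle (valid across the $C^0$-glued sphere) yields $u \equiv 0$, i.e.\ $\tilde f \equiv f_k(x_1) \neq 0$. Consequently $\tilde f_i$ is bounded away from zero on every fixed compact of $\hat M$ for $i$ large; bridging with the macroscopic regime via a diagonal choice $\delta_i \to 0$ together with an exhaustion $K_i \nearrow \hat M$ covers all of $M_{\epsilon_i} \setminus M_1(\epsilon_0)$, contradicting $x_i \in \NNN_k(M_{\epsilon_i}) \setminus M_1(\epsilon_0)$.

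The main obstacle is pinning down the blow-up limit $\tilde f$ on the Euclidean end of $\hat M$: Theorem \ref{theo: eigenfunction convergence} only provides information on fixed compacts of $M_1 \setminus \{x_1\}$, so forcing $\tilde f \to f_k(x_1)$ as $|y| \to \infty$ requires a careful diagonal argument invoking the continuity of $f_k$ at $x_1$. A secondary technical point is applying the strong maximum principle on the hybrid manifold $\hat M$, whose metric is merely $C^0$ across the gluing sphere but smooth elsewhere.
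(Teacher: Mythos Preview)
Your blow-up approach is genuinely different from the paper's. The paper never rescales: it splits into topological cases according to whether the nodal set has a component entirely inside $M_{\epsilon_j}\setminus M_1(\epsilon_0)$ or whether some nodal arc crosses $\partial M_1(\epsilon_0)$. In the first case, a nodal domain $\Omega'_{\epsilon_l}\subset M_2(\epsilon_l)$ gives $\lambda_k(M_{\epsilon_l})=\lambda_1(\Omega'_{\epsilon_l})=\epsilon_l^{-2}\lambda_1(\Omega_l)\ge \epsilon_l^{-2}\lambda_1(M_2(1))\to\infty$, contradicting Theorem~\ref{thm:Tak_1.1}; in the second, the crossing points $z_j\in\partial M_1(\epsilon_0)$ subconverge to a point of $\NNN_k(M_1)$ by Lemma~\ref{lem:lim_M_1}, contradicting the choice of $\epsilon_0$. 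A separate short argument rules out the possibility that the whole nodal set sits outside $M_1(\epsilon_0)$. This is entirely elementary and uses only domain monotonicity and the $C^0$ convergence already established.

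Your argument has a real gap precisely at the step you flag as the main obstacle, and the proposed ``diagonal argument'' does not close it. Theorem~\ref{theo: eigenfunction convergence} controls $f_k^{\epsilon_i}$ only on \emph{fixed} compacts of $M_1\setminus\{x_1\}$; in rescaled coordinates a fixed sphere $\{|x-x_1|=\rho\}$ sits at $|y|=\rho/\epsilon_i\to\infty$, so that information is invisible to the $C^\infty_{\mathrm{loc}}$ limit defining $\tilde f$. A diagonal choice $\rho_i\to 0$ with $R_i:=\rho_i/\epsilon_i\to\infty$ does give $\tilde f_i\approx f_k(x_1)$ on $\{|y|=R_i\}$, but to propagate this inward to fixed $|y|$ you need a comparison on the growing region $\{|y|\le R_i\}\cup M_2(1)$. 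For $n\ge 3$ one can hope to use that the harmonic measure of the inner boundary $\{|y|=1\}$ seen from fixed $|y|$ decays like $|y|^{2-n}$, but in dimension $n=2$ (which the paper explicitly allows) the radial harmonic profile on an annulus $\{1\le|y|\le R\}$ is $a+(b-a)\log|y|/\log R$, so as $R\to\infty$ the value at fixed $|y|$ is governed by the \emph{inner} boundary data, over which you have no a~priori control. In particular you cannot rule out $\tilde f\equiv 0$, which would kill the contradiction. The same scale-mismatch reappears in your bridging step: the microscopic regime covers $|y|\le R$ and the macroscopic covers $|y|\ge\delta/\epsilon_i$, and you have not produced a \emph{uniform} positive lower bound on $|f_k^{\epsilon_i}|$ across the intermediate annulus. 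The paper's domain-monotonicity argument sidesteps all of this.
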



\begin{proof}
The proof will be divided into two cases:\newline
Case I: There exists an infinite sequence of points $\{x_i\}\subset \NNN_k(M_{\epsilon_i})\cap M_1(\epsilon_0)$.\newline
Case II: For small enough $\epsilon_i$, $\NNN_k(M_{\epsilon_i})\cap M_1(\epsilon_0) = \emptyset$. 
\newline
We first tackle Case I. 


Recall that we would like to show that $\NNN_k(M_{\epsilon_i}) \subset M_1(\epsilon_0)$ for $\epsilon_i$ small enough. If not, suppose there exists a subsequence $\{j\}\subset \{i\}$ such that $\NNN_k(M_{\epsilon_j})\nsubseteq M_1(\epsilon_0)$ i.e., $\NNN_k(M_{\epsilon_j})$ is contained in both $M_1(\epsilon_0)$ and $M_{\epsilon_j}\setminus M_1(\epsilon_0)$. It can happen in the following three ways:
\begin{enumerate}
    \item[$(C):$] The nodal set passes from $M_1(\epsilon_0)$ to $M_{\epsilon_j}\setminus M_1(\epsilon_0)$ in a continuous path and no other disconnected component of $\NNN_k(M_{\epsilon_j})$ is contained in $M_{\epsilon_j}\setminus M_1(\epsilon_0)$ (see Figure 1).
    
    \vspace{2mm}
    
    $$\includegraphics[height=3.7cm]{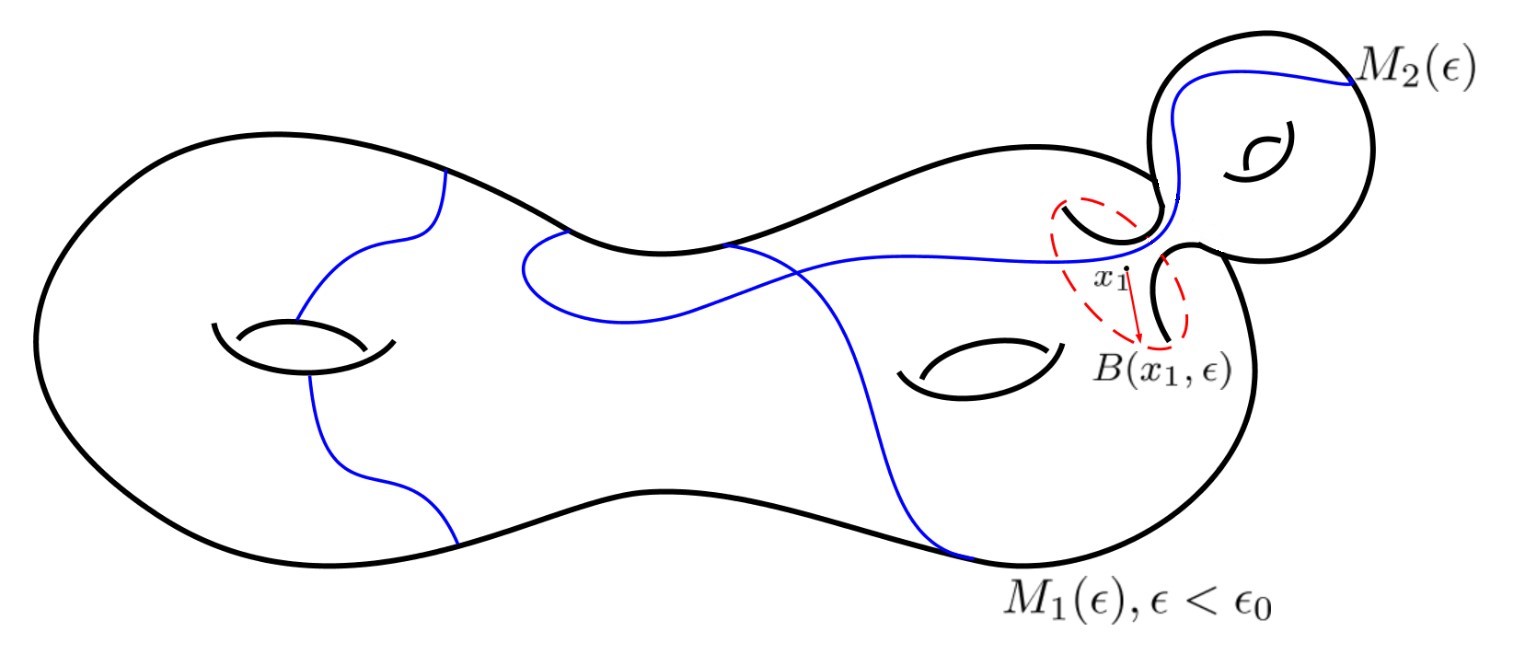}$$

    \begin{center}
     
        {\small Figure 1}
    \end{center}
    
    \vspace{5mm}
    \item[$(D1):$] A component of the nodal set passes from $M_1(\epsilon_0)$ to $M_{\epsilon_j}\setminus M_1(\epsilon_0)$ in a continuous path but some other disconnected component of $\NNN_k(M_{\epsilon_j})$ is also contained in $M_{\epsilon_j}\setminus M_1(\epsilon_0)$ (see Figure 2).
    
        \vspace{2mm}
        
    $$\includegraphics[height=3.7cm]{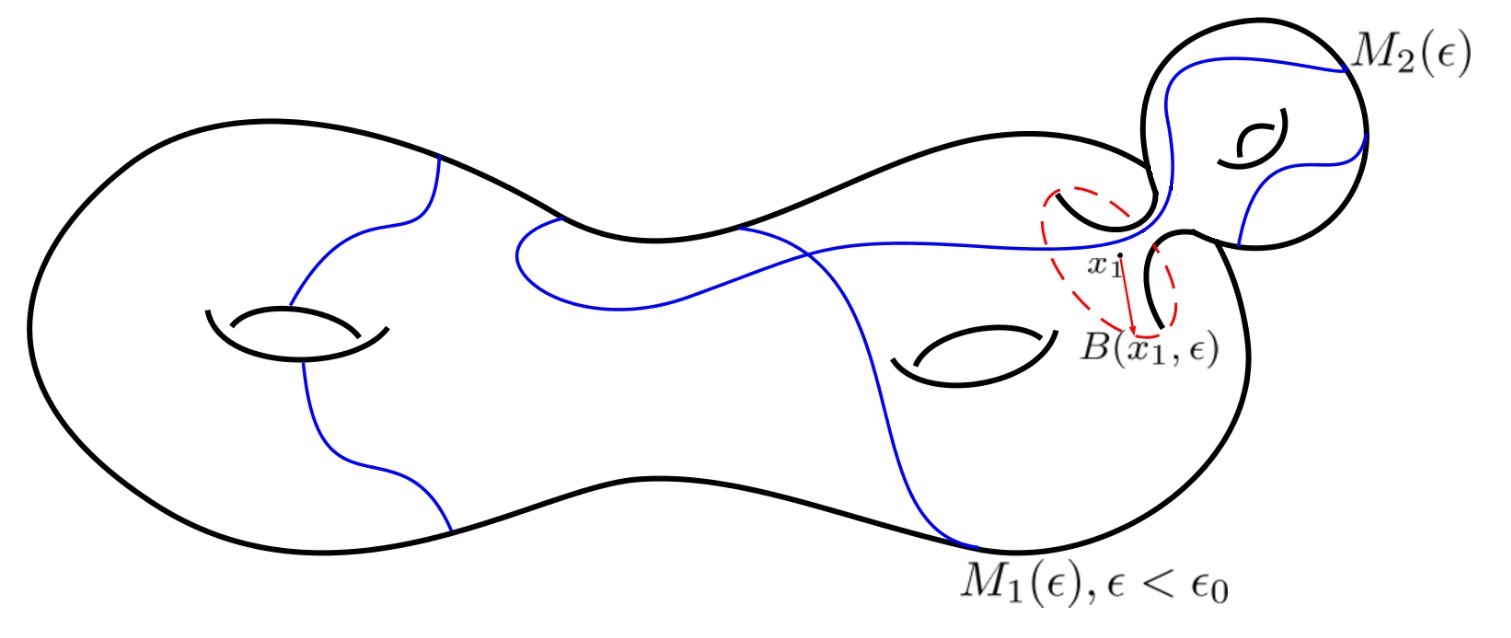}$$
    
    \begin{center}
        {\small Figure 2}
    \end{center}
    \vspace{5mm}
    \item[$(D2):$] There is no continuous path as a part of the nodal set from $M_1(\epsilon_0)$ to $M_{\epsilon_j}\setminus M_1(\epsilon_0)$ but the nodal set of $\NNN_k(M_{\epsilon_j})$ contains at least one  disconnected component in $M_{\epsilon_j}\setminus M_1(\epsilon_0)$ (see Figure 3). 
    
        \vspace{2mm}
        
    $$\includegraphics[height=3.7cm]{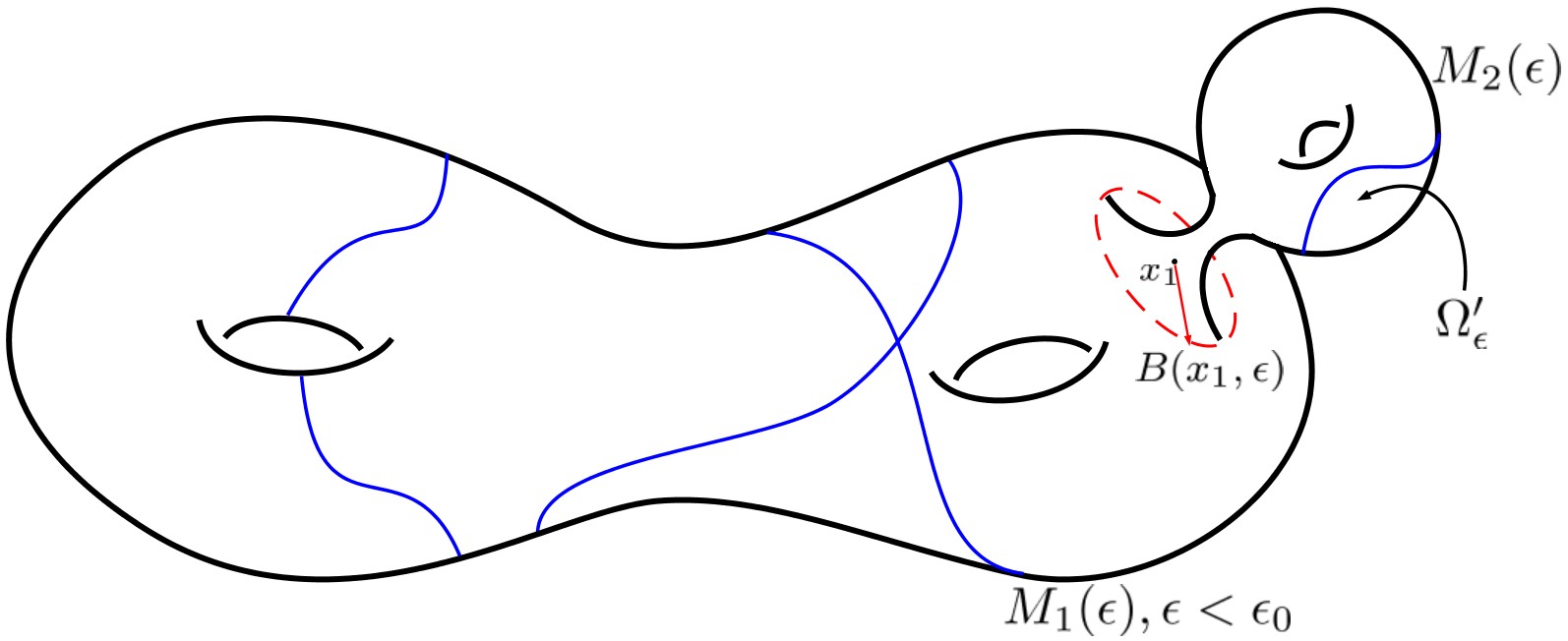}$$
     \begin{center}
        {\small Figure 3}
    \end{center}
    \vspace{5mm}
\end{enumerate}

We would like to show that as $\epsilon_j\to 0$ any disconnected component of $\NNN_k(M_{\epsilon_j})$ lies inside $M_1(\epsilon_0)$ i.e. cases $(D1)$ and $(D2)$ hold only for finitely many $j$'s.  Note that, it is enough to show that case $(D2)$ holds for finitely many $j$'s. 

If not, let there exist a subsequence $\{l\}\subset \{j\}$ such that $(D2)$ holds. 
Let $\Omega'_{\epsilon_l}$ 
denote a nodal domain completely contained in $M_2(\epsilon_l)$ 
(see Figure 3). We would like to show that  $\lambda_1(\Omega'_{\epsilon_l})\to \infty$ as $l\to \infty$ i.e. as $\epsilon_l\to 0$.

    


Note that as $\epsilon_l \to 0$, from the definition of the given metric, we have that $|\Omega'_{\epsilon_l}| \to 0$. 
Clearly, using the definition of the given metric, every nodal domain $\Omega'_{\epsilon_l}$ contained in $M_2(\epsilon_l)$ can be re-scaled to $\Omega_l$ (say) contained in $(M_2(1), g_2)$ which gives us the equality
\begin{equation}\label{eqtn: rescaled eigenvalue}
    \lambda_1(\Omega'_{\epsilon_l}) = \frac{1}{\epsilon_l^2}\lambda_1(\Omega_l).
\end{equation}
Also, for any $l$, using domain monotonicity we have
\begin{equation}\label{ineq: domain monotonicity}
    \lambda_1(\Omega_l) \geq \lambda_1(M_2(1)).
\end{equation}
Combining (\ref{eqtn: rescaled eigenvalue}) and (\ref{ineq: domain monotonicity}), we have that $\lambda_1(\Omega'_{\epsilon_l}) \to \infty$ as $\epsilon_l \to \infty$.


Note that,
\begin{equation}
    \lambda_1(\Omega'_{\epsilon_l})
    = \lambda_k(M_{\epsilon_l}).
\end{equation}

By Theorem \ref{thm:Tak_1.1}, we have that as $l\to \infty$ i.e. $\epsilon_l \to 0$, 
\begin{equation}\label{theo: Takahashi}
    \lambda_k(M_{\epsilon_l}) \to \lambda_k(M_1),
\end{equation}
which contradicts $\lambda_1(\Omega'_{\epsilon_l})\to \infty$. So, $(D2)$ cannot hold for infinitely many $\epsilon_j$'s.

From the above observation, if there exists a subsequence $\{j\}\subset \{i\}$ such that for some $y_j\in \NNN_k(M_{\epsilon_j})$, $y_j \notin M_1(\epsilon_0)$ then $\NNN_k(M_{\epsilon_j})$ satisfies condition $(C)$ 
except for finitely many $j$'s (which can be ignored). Then for each $j$, we get a continuous path from $x_j$ to $y_j$ 
contained in $\NNN_k(M_{\epsilon_j})$ whose one end is in $M_1(\epsilon_0)$ and the other end in $M_{\epsilon_j}\setminus M_1(\epsilon_0)$. Then there exists a point $z_j\in \NNN_k(M_{\epsilon_j})\cap \pa M_1(\epsilon_0)$ (see Figure 4). 

    
     \begin{center}
     \vspace{2mm}
     \includegraphics[height=3.7cm]{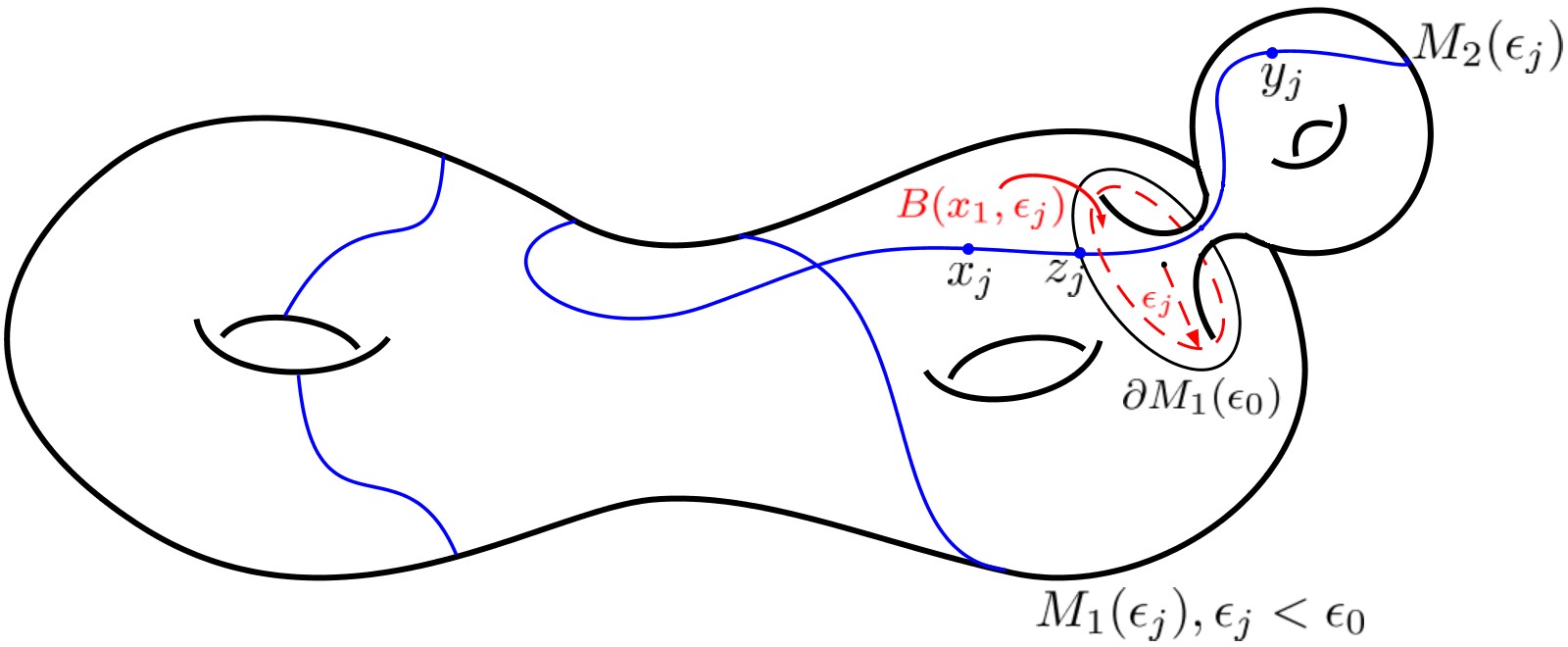}
     \vspace{2mm}
     
            {\small Figure 4}
    \end{center}

     \vspace{5mm}
Choosing a convergent subsequence of $\{z_n\}$ (and still calling it   $\{z_n\}$ with mild abuse of notation), we find a limit $z\in  \pa M_1(\epsilon_0)$, which satisfies that that $z\in \NNN_k(M_1)$ (by Lemma \ref{lem:lim_M_1}).

This implies that the intersection $\NNN_k(M_1)\cap \pa M_1(\epsilon_0)$ is non-empty. But we have assumed that the nodal set $\NNN_k(M_1)$ is at a distance $d$ from the part of the boundary $\pa B(x_1, \epsilon_0)$ of $M_1(\epsilon_0)$. This contradicts the choice of $\epsilon_0$. This shows that there cannot be any such subsequence $\{j\}\subset \{i\}$ i.e. there exists some $N\in \NN$ such that $\NNN_k(M_{\epsilon_i}) \subset M_1(\epsilon_0)$ for all $i>N$. 

Case II: If there is no infinite sequence of $\{x_i\}\in \NNN_k(M_{\epsilon_i})\cap M_1(\epsilon_0)$, then there exists a number $N\in \NN$ such that for any $i>N$, $\NNN_k(M_{\epsilon_i})\subset M_2(\epsilon)$. By Courant's theorem, $\NNN_k(M_{\epsilon_i})$ divides $M_{\epsilon_i}$ into at most $k$ parts. Define 
$$M_{\epsilon_i}^+=\{x\in M_{\epsilon_i}: f_k^{\epsilon_i}>0\} \text{ and  } M_{\epsilon_i}^-=\{x\in M_{\epsilon_i}: f_k^{\epsilon_i}<0\}.$$

Then $M_{\epsilon_i}= M_{\epsilon_i}^+ \sqcup \NNN_k(M_{\epsilon_i}) \sqcup M_{\epsilon_i}^-$ and $M_1(\epsilon_0)\subset M_{\epsilon_i}^+$ or $M_{\epsilon_i}^-$ for all $i>N$. The convergence $f_k^{\epsilon_i}|_{M_1(\epsilon_i)}:=f_k^{1, \epsilon_i}\to f_k$ in $C^0(M_1(\epsilon_0))$ implies that $f_k$ has to be either non-negative or non-positive on $M_1(\epsilon_0)$. But that cannot happen because $\NNN_k(M_1)$ is contained in $M_1(\epsilon_0)$ (by our assumption). So Case II can never happen.
\end{proof}

Lastly, to get a quantitative estimate on the order of the perturbation, we use Weyl's law in conjunction with Proposition \ref{thm:nod_wav_dense}. Recall that 
Weyl's law states that if $M$ is a compact connected oriented Riemannian manifold then
\begin{equation}\label{Theo: Weyl's law}
N(\lambda) = \frac{|B_n|}{(2\pi)^n}|M|\lambda^{n/2} + O(\lambda^{\frac{n - 1}{2}}),
\end{equation}
where $N(\lambda)$ is the number of eigenvalues of $-\Delta_M$ which are less than or equal to $\lambda$, and $|B_n|$ is the volume of the unit $n-$ball $B(0, 1)\subset \RR^n$. We have already shown that the nodal set $\NNN_k(M_{\epsilon})$ has null intersection with $M_2(\epsilon)$ for small enough $\epsilon$. However, $\NNN_k(M_{\epsilon})$ is also wavelength dense in $M_\epsilon$, which means that every point in $M_2(\epsilon)$ is at most at distance $C\lambda_k^{-1/2}$ from $\pa M_1(\epsilon)$. 
Also, using the triangle inequality, we see that there exists at least one point $\tilde{x} \in M_2(\epsilon)$ such that $d(\tilde{x}, \pa M_1(\epsilon)) \geq \frac{\epsilon}{2}\left(D - \tilde{D}\right)$, which implies that $\frac{\epsilon}{2}\left(D - \tilde{D}\right) \leq C\lambda_k^{-1/2}$. 
Now one can easily calculate that the constant $c$ in Remark \ref{rem:2.2}  comes from the  constant $C$ in Proposition \ref{thm:nod_wav_dense} via the application of Weyl's law:
\begin{equation}\label{eq:const_explic}
    c = \frac{C}{(D - \tilde{D})\pi}\left( |B_n||M_{\epsilon_0}|\right)^{1/n},
\end{equation}
where $B_n$ is the unit ball in $\RR^n$, $D, \tilde{D}$ are  the diameters of $M_2(1)$ and the metric sphere $\pa B(x_2, 1) \subseteq M_2$. Note that we can assume without loss of generality that $D > \tilde{D}$, as we can always start the construction in (\ref{eq:Riem_met_def}) with $M_2(\eta)$ for small enough $\eta$.

\begin{remark}
We observe that the proof above gives us slightly more, namely it gives us an analogous conclusion for level sets at small enough levels. Assume that the $\alpha$-level set of $f_k$ 
(that is, the set $f_k^{-1}(\alpha)$) does not intersect the geodesic ball $B(x_1, \epsilon_0)$, which is chosen at the beginning of the proof. Then the conclusion is that for small enough $\epsilon$, the $\alpha$-level set of $f^\epsilon_k$ 
lies completely inside $M_1$. The proof naturally splits into cases outlined above and follows the same steps. The proofs for the cases $(C)$ and $(D1)$ are 
completely similar, observing that the $C^\infty$-convergence of eigenfunctions away from $x_1$ gives convergence of level sets. On the other hand, the case (D2) follows from the fact that when $\epsilon$ is small enough, the $\alpha$-level set cannot fully lie inside $M_2(\epsilon)$. 
The latter follows since there is no $\alpha$-level set of $f^\epsilon_k$ inside $M_1(\epsilon) \setminus M_1(\epsilon_0)$, and hence $f_k^\epsilon$ has to grow arbitrarily fast around near $\pa M_1(\epsilon)$, making $|\nabla f^\epsilon_k |$ arbitrarily large.
\end{remark}

\section{A few applications}\label{sec:4}
\subsection{Payne property}
To illustrate an example, we now argue how we can get a perturbed version of Theorem 1.1 of \cite{M}. This will follow from an application of Lemmas \ref{lem:lim_M_1} and \ref{lem:nod_set_eventually}. Let $\Omega \subset \RR^n$ be a convex domain with smooth boundary which is perturbed to make a one-parameter family of (not necessarily convex) domains $\Omega_\epsilon := \Omega \# \epsilon\Omega'$, 
via the construction outlined after Corollary \ref{cor:Lewy}. We are interested in seeing whether the nodal set of the second Dirichlet eigenfunction for $\Omega_\epsilon$ intersects $\pa\Omega_\epsilon$ for $\epsilon$ small enough. 
For ease of writing, let us call this the ``Payne property''.

\begin{proposition}\label{cor:Melas}
Suppose $\Omega \subset \RR^n$ has the Payne property. Then there exists an $\epsilon_0 > 0$ such that 
$\Omega_\epsilon$ 
has the Payne property for all $\epsilon \leq \epsilon_0$.
\end{proposition}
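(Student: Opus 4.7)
The plan is to establish the Payne property for $\Omega_\epsilon$ via an intermediate value argument applied to the inward normal derivative of $\varphi_2^{\Omega_\epsilon}$ along $\partial\Omega_\epsilon$. First, the Payne property of $\Omega$, combined with the fact that $\varphi_2^\Omega$ has exactly two nodal domains $\Omega^{+}, \Omega^{-}$ (by Courant's theorem and orthogonality to $\varphi_1^\Omega$), rules out the scenario that either nodal domain is compactly contained in $\Omega$: if, say, $\overline{\Omega^{+}} \subset \Omega$, then $\NNN_2(\Omega) = \partial \Omega^{+}$ would be a closed interior hypersurface, contradicting $\NNN_2(\Omega) \cap \partial\Omega \neq \emptyset$. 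Therefore both $\overline{\Omega^{+}}$ and $\overline{\Omega^{-}}$ meet $\partial\Omega$, and Hopf's boundary point lemma produces non-nodal points $p_\pm \in \partial\Omega$ at which the inward normal derivative of $\varphi_2^\Omega$ is strictly positive (respectively, strictly negative). Exactly as in the set-up preceding Theorem \ref{thm:main_thm}, one then picks the gluing point $x_1 \in \partial\Omega$ and a scale $\epsilon_0 > 0$ so that $B(x_1, 3\epsilon_0)$ misses $\{p_+, p_-\} \cup \NNN_2(\Omega)$.

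Next, I would apply the domain version of the spectral convergence, Theorem \ref{thm:dom_eigen_conv}, to get $\varphi_2^{\Omega_\epsilon} \to \varphi_2^\Omega$ in $C^\infty$ on compact subsets of $\Omega \setminus B(x_1, \epsilon_0)$. In particular, for all sufficiently small $\epsilon$, the perturbed eigenfunction $\varphi_2^{\Omega_\epsilon}$ is strictly positive on a fixed small interior half-ball abutting $p_+$, and strictly negative on the analogous half-ball abutting $p_-$. Invoking Hopf's boundary point lemma inside $\Omega_\epsilon$ at $p_\pm$ then yields $\partial_\nu \varphi_2^{\Omega_\epsilon}(p_+) > 0$ and $\partial_\nu \varphi_2^{\Omega_\epsilon}(p_-) < 0$, where $\partial_\nu$ denotes the inward normal derivative; by boundary elliptic regularity, $\partial_\nu \varphi_2^{\Omega_\epsilon}$ is continuous on the smooth boundary $\partial\Omega_\epsilon$.

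Finally, since $p_\pm$ both lie in the unperturbed portion $\partial\Omega \setminus B(x_1, \epsilon_0) \subset \partial\Omega_\epsilon$, and removing a small ball from the smooth boundary of a bounded domain preserves path-connectedness (in any dimension $n \geq 2$), we can join $p_\pm$ by a path $\gamma \subset \partial\Omega \setminus B(x_1, \epsilon_0)$. Along $\gamma$ the continuous function $\partial_\nu \varphi_2^{\Omega_\epsilon}$ changes sign and hence vanishes at some $q \in \gamma$; combined with the Dirichlet condition $\varphi_2^{\Omega_\epsilon}(q) = 0$, this exhibits a boundary nodal point and establishes $\NNN_2(\Omega_\epsilon) \cap \partial\Omega_\epsilon \neq \emptyset$, as required.

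The main anticipated obstacle lies in the second step: Theorem \ref{thm:dom_eigen_conv} records only interior $C^\infty$-convergence, so one must take care to transfer sign information from the interior right up to $\partial\Omega$. The Hopf-lemma detour above circumvents this by relying only on interior sign data on a fixed half-ball strictly inside $\Omega_\epsilon$; alternatively one may upgrade Theorem \ref{thm:dom_eigen_conv} to $C^\infty$-convergence up to the unperturbed portion $\partial\Omega \setminus B(x_1, \epsilon)$ of the boundary via boundary Schauder estimates, since both $\varphi_2^\Omega$ and $\varphi_2^{\Omega_\epsilon}$ satisfy the same Dirichlet condition there. A secondary subtlety is that one implicitly uses continuity of the simple second eigenvalue $\lambda_2$ under the perturbation so that $\varphi_2^{\Omega_\epsilon}$ is unambiguously the second Dirichlet eigenfunction for small $\epsilon$, which is again furnished by Theorem \ref{thm:dom_eigen_conv}.
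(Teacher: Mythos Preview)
Your approach is genuinely different from the paper's. The paper argues by contradiction via Hausdorff convergence of nodal sets: it invokes the domain analogues of Lemmas \ref{lem:lim_M_1} and \ref{lem:nod_set_eventually} to place $\NNN_2^\epsilon$ inside a $\delta$-tube of $\NNN_2(\Omega)$, and then observes that a closed interior nodal hypersurface trapped in such a tube would force one nodal domain of $\varphi_2^{\Omega_\epsilon}$ to have vanishing volume, contradicting Faber--Krahn. Your route bypasses the nodal-set convergence machinery entirely and works directly with the normal derivative on $\partial\Omega$ via an intermediate value argument. This is more elementary in spirit and needs only eigenfunction convergence (plus its upgrade to $C^1$ up to the unperturbed boundary, which you correctly flag and which does follow from boundary Schauder estimates). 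The paper's argument, in exchange, stays purely on the interior and reuses the lemmas already proved.

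Two points need tightening. First, in the setup of the proposition the gluing point $x_1\in\partial\Omega$ is \emph{given}, not chosen; you cannot arrange $B(x_1,3\epsilon_0)\cap\NNN_2(\Omega)=\emptyset$. This is harmless for your argument: instead choose $p_\pm$ (and the connecting path) in $\partial\Omega\setminus B(x_1,\epsilon_0)$, which is possible because the Payne property forces both $\Omega^+$ and $\Omega^-$ to meet the same boundary component in a relatively open set, and removing a small disk from that component keeps it path-connected. Second, and more substantively, the last step is not complete as written: the Dirichlet condition makes \emph{every} boundary point a zero of $\varphi_2^{\Omega_\epsilon}$, so $\varphi_2^{\Omega_\epsilon}(q)=0$ together with $\partial_\nu\varphi_2^{\Omega_\epsilon}(q)=0$ does not by itself exhibit $q$ as a limit of interior zeros. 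You need one more application of Hopf's lemma: if $q$ were not in the closure of the interior nodal set, then $\varphi_2^{\Omega_\epsilon}$ would have a sign on an interior ball tangent to $\partial\Omega_\epsilon$ at $q$, and Hopf would force $\partial_\nu\varphi_2^{\Omega_\epsilon}(q)\neq 0$, a contradiction. With that line added, your argument goes through.
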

\begin{proof}
Now, by Lemma \ref{lem:nod_set_eventually}, we know that $\NNN_2^\epsilon$ is eventually inside $\Omega$. By a straightforward topological argument, if  $\NNN_2^\epsilon$ does not intersect $\pa\Omega$, then it is an embedded hypersurface with possible ``lower dimensional'' singularities. 
By precompactness in Hausdorff metric (see, for example Theorem 2.2.25 of \cite{HP}), 
one can extract a subsequence called  $\NNN_2^{\epsilon_i}$, 
which converges to a set $X \subset \Omega$ 
in the Hausdorff metric. By Lemma \ref{lem:lim_M_1}, we already know that $X \subset \NNN_2(\Omega)$. 
It follows that for $i$ large enough, $\NNN_2^{\epsilon_i}$ is within any $\delta$-tubular neighbourhood of $\NNN_2(\Omega)$. 
This means that one of nodal domains of the second Dirichlet eigenfunction of $\Omega$ is within any $\delta$-tubular neighbourhood of $\NNN_2(\Omega)$ and the volume of such a tubular neighbourhood is going to $0$ as $\delta \searrow 0$. 
This will contradict the Faber-Krahn inequality (or the inner radius estimate for the second nodal domain of $\Omega$), and imply that for large enough $i$, $\NNN_2^{\epsilon_i}$ intersects the boundary. 
Moreover, if $\NNN_2$ is a submanifold, then using Thom's isotopy theorem (see \cite{AR}, Section 20.2) one can conclude that for large enough $i$, $\NNN_2^{\epsilon_i}$ is diffeomorphic to $\NNN_2(\Omega)$. This is precisely the case in dimension $2$, by Theorem 1.1 of \cite{M}. 
\end{proof}
In fact, as a further application of main ideas involved in proving Lemma \ref{lem:lim_M_1}, we state the following Proposition, which complements recent results (see for example, \cite{Ki} and references therein):
\begin{proposition}\label{cor:del_hol_cap}
Let $\Omega \subset \RR^n$ be a domain satisfying the Payne property. Pick $l$ points $x_1, x_2,..., x_l$ $ \in \Omega$ lying outside $\NNN_2(\Omega)$ and call $\Omega(\epsilon) = \Omega \setminus \cup_{i = 1}^l B(x_i, \epsilon)$. 
Then for $\epsilon$ small enough, $\Omega(\epsilon)$ satisfies the Payne property. 
\end{proposition}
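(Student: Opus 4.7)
\textbf{Proof plan for Proposition \ref{cor:del_hol_cap}.} The plan is to run the proof of Proposition \ref{cor:Melas} with the connected-sum perturbation replaced by the perforation $\Omega(\epsilon) = \Omega \setminus \bigcup_{i=1}^{l} B(x_i,\epsilon)$. As a first step I would record the perforated analogue of Theorem \ref{thm:dom_eigen_conv}: $\lambda_k(\Omega(\epsilon)) \to \lambda_k(\Omega)$, and the $L^2$-normalised Dirichlet eigenfunctions $f_k^\epsilon$ on $\Omega(\epsilon)$ converge to $f_k$ in $C^{\infty}_{\loc}(\Omega \setminus \{x_1,\dots,x_l\})$. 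This is classical: the Newtonian capacity of $\overline{B(x_i,\epsilon)}$ tends to $0$ as $\epsilon \to 0$ (order $\epsilon^{n-2}$ for $n \geq 3$, and $1/|\log \epsilon|$ for $n=2$), so in the variational characterisation the perturbation is negligible (see e.g.\ \cite{RT}), and interior elliptic regularity upgrades $H^1$-weak convergence to $C^{\infty}_{\loc}$ away from the removed points.

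Next, since each $x_i$ lies outside the closed set $\NNN_2(\Omega)$, I fix $\delta > 0$ small enough so that (i) $\overline{B(x_i,2\delta)} \cap \NNN_2(\Omega) = \emptyset$ for every $i$, and (ii) the Faber--Krahn lower bound on the first Dirichlet eigenvalue of any subdomain of $\RR^n$ of volume at most $|B(x_i,\delta)|$ exceeds $2\lambda_2(\Omega)$ (both are upper bounds on $\delta$ and hence compatible). I would then establish the perforated analogue of Lemma \ref{lem:nod_set_eventually}, namely $\NNN_2^\epsilon \subset \Omega \setminus \bigcup_i B(x_i,\delta)$ for all sufficiently small $\epsilon$, via the same trichotomy. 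In case (D2), a connected component of $\NNN_2^\epsilon$ entirely trapped inside some annulus $B(x_i,\delta) \setminus B(x_i,\epsilon)$ would enclose a nodal subdomain $\Omega'_\epsilon$ (possibly together with part of the hole boundary $\pa B(x_i,\epsilon)$, on which $f_2^\epsilon$ also vanishes) of volume at most $|B(x_i,\delta)|$; since $\lambda_1(\Omega'_\epsilon) = \lambda_2(\Omega(\epsilon))$, condition (ii) combined with Faber--Krahn yields $\lambda_2(\Omega(\epsilon)) \geq 2\lambda_2(\Omega)$, contradicting eigenvalue convergence. In cases (C), (D1), a continuous arc of $\NNN_2^\epsilon$ crossing $\pa B(x_i,\delta)$ produces points $z_\epsilon \in \NNN_2^\epsilon \cap \pa B(x_i,\delta)$ whose limit $z$ must, by the literal analogue of Lemma \ref{lem:lim_M_1} (which uses only $C^0$-convergence on the compact set $\pa B(x_i,\delta) \subset \Omega \setminus \{x_1,\dots,x_l\}$), lie in $\NNN_2(\Omega) \cap \pa B(x_i,\delta) = \emptyset$ by (i), a contradiction.

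With the localisation in hand, note $\pa \Omega(\epsilon) = \pa \Omega \cup \bigcup_i \pa B(x_i,\epsilon)$, and since $\NNN_2^\epsilon$ stays at distance $\geq \delta - \epsilon$ from $\bigcup_i \pa B(x_i,\epsilon)$, the Payne property for $\Omega(\epsilon)$ reduces to $\NNN_2^\epsilon \cap \pa \Omega \neq \emptyset$. From here the proof of Proposition \ref{cor:Melas} applies \emph{mutatis mutandis}: if $\NNN_2^\epsilon$ missed $\pa \Omega$, then it would be a closed embedded hypersurface (with lower-dimensional singularities) in the interior of $\Omega \setminus \bigcup_i B(x_i,\delta)$; Hausdorff precompactness extracts a subsequence $\NNN_2^{\epsilon_j}$ converging to some $X \subset \NNN_2(\Omega)$; hence $\NNN_2^{\epsilon_j}$ eventually lies in an arbitrarily thin tubular neighbourhood of $\NNN_2(\Omega)$, forcing a nodal domain of $f_2^{\epsilon_j}$ to have vanishing inradius, which contradicts the inner-radius lower bound of \cite{Ma, GM, GM1} applied with $\lambda_2(\Omega(\epsilon)) \to \lambda_2(\Omega) < \infty$. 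The main technical hurdle is the perforated eigenvalue/eigenfunction convergence of the first paragraph, specifically securing $C^\infty_{\loc}$-convergence across the shrinking Dirichlet holes; once this is in place, the remainder is bookkeeping on top of Lemmas \ref{lem:lim_M_1}--\ref{lem:nod_set_eventually} and Proposition \ref{cor:Melas}.
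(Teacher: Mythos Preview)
Your proposal is correct and follows the same route as the paper: establish eigenvalue/eigenfunction convergence for the perforated domains via vanishing capacity and \cite{O, RT}, then feed this into the analogues of Lemmas \ref{lem:lim_M_1}--\ref{lem:nod_set_eventually} and the argument of Proposition \ref{cor:Melas}. The only difference is emphasis: the paper spends almost all of its proof spelling out the eigenfunction convergence when $\lambda_2(\Omega)$ is not simple (weak-$H^1_0$ limits, the computation \eqref{eq:Ozawa}, and preservation of orthonormality), while treating the localisation and Payne-type conclusion as immediate from Lemma \ref{lem:lim_M_1} and Proposition \ref{cor:Melas}; you do the reverse, writing out the (C)/(D1)/(D2) trichotomy in detail and packaging the convergence (including multiplicity) as a black-box analogue of Theorem \ref{thm:dom_eigen_conv}.
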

\begin{proof}
If $\lambda_2$ is simple, and the capacity of the deficiency $\Omega \setminus \Omega(\epsilon)$ is going to $0$, we have that $\lambda_2(\epsilon) \searrow \lambda_2$, and a $C^0$ convergence of $f_2^\epsilon \to f_2$ in compact subsets of $\Omega \setminus \{ x_1,...., x_l\}$. A combination of Lemma \ref{lem:lim_M_1} and the argument of Proposition \ref{cor:Melas} now gives the result. 

Even if the Dirichlet Laplace spectrum of $\Delta$ is not simple, it follows from \cite{O, RT} that $\lambda_j(\epsilon) \to \lambda_j$, where $\lambda_j(\epsilon) = \lambda_j(\Omega(\epsilon))$ and $\lambda_j = \lambda_j(\Omega)$. We will use this below.

To take care of the case where $\lambda_2$ is repeated, we will adapt the ideas of Theorem 1.1 of \cite{T} to our setting, but here the situation is less complicated, as all the domains $\Omega(\epsilon)$ are sitting inside $\Omega$, so one can work directly with Sobolev spaces which are defined on the whole domain $\Omega$. Take an orthonormal family of Dirichlet eigenfunctions $f_1^{\epsilon_k},..., f_d^{\epsilon_k}$ of the Dirichlet Laplacian $\Delta_{\epsilon_k}$ of $\Omega(\epsilon_k)$. Since $\| f_j^{\epsilon_k} \|_{H_0^1(\Omega)}$ is bounded, we have a weakly-$H^1_0$ convergent subsequence (still called $f_j^{\epsilon_k}$ with minor abuse of notation), such that $f_j^{\epsilon_k} \to f_j \in H^1_0(\Omega)$. 
By compact Rellich embedding (up to a subsequence), the above convergence is strong in $L^2(\Omega)$.

Let $B : H_0^1(\Omega) \times H_0^1(\Omega) \to \RR$ be the bilinear form associated to $\Delta$, given by (for $u, v \in C^\infty_c(\Omega)$)
$$
B(u, v) := \int_\Omega \nabla u\nabla v.
$$
Let $\psi \in C^\infty_c(\Omega \setminus \{x_1,...,x_l\})$ be a test function, the latter set being dense in $H^1_0(\Omega)$. Now  we have 
\begin{align}\label{eq:Ozawa}
    B(f_j, \psi) & = \int_\Omega \nabla f_j \nabla \psi = \lim_{k \to \infty} \int_{\Omega(\epsilon_k)} \nabla f_j^{\epsilon_k}\nabla \psi \nonumber\\
    & = \lim_{k \to \infty} \int_{\Omega(\epsilon_k)} -\Delta_{\epsilon_k} f_j^{\epsilon_k}\psi + \lim_{k \to \infty} \int_{\pa\Omega(\epsilon_k)} (\pa_n f_j^{\epsilon_k}) \psi \nonumber \\
    & = \lim_{k \to \infty} \lambda_j(\epsilon_k)\int_{\Omega(\epsilon_k)} f_j^{\epsilon_k}\psi = \lambda_j \int_{\Omega} f_j\psi,
\end{align}
which implies that $f_j$ weakly solves the eigenequation 
\beq\label{eq:eigeneq}
-\Delta f_j = \lambda_j f_j.
\eeq
Observe that in (\ref{eq:Ozawa}), we have used Ozawa's result on the convergence of eigenvalues. By standard elliptic regularity results, $f_j$ solves (\ref{eq:eigeneq}) strongly, and is a $C^\infty$ Dirichlet eigenfunction of $\Delta$. This gives us $C^0$-convergence in compact subsets of $\Omega \setminus \{x_1,..., x_l\}$.

We will now check that orthonormality is preserved in the limiting process. That is, if $(f_i^{\epsilon_k}, f_j^{\epsilon_k}) = 0$, then $(f_i, f_j) = 0$. Since the convergence $f_j^{\epsilon_k} \to f_j$ is strong in $L^2$, we have 
\begin{align*}
    (f_i, f_j) & = (f_i - f_i^{\epsilon_k}, f_j - f_j^{\epsilon_k}) + (f_i^{\epsilon_k}, f_j - f_j^{\epsilon_k}) + (f_i, f_j^{\epsilon_k}).
\end{align*}
By Cauchy-Schwarz, the first two terms on the right are going to $0$. Also, 
$$ 
\lim_{\epsilon_k \to 0}(f_i, f_j^{\epsilon_k}) = \lim_{\epsilon_k \to 0} (f_i^{\epsilon_k}, f_j^{\epsilon_k}) = 0.
$$
This gives us the proof, as we have shown that eigenfunctions are converging to eigenfunctions in the limit,  orthonormality is preserved, and lastly, every eigenfunction in the limit domain, is the limit of some converging eigenfunction branch. For our application, we only need the convergence result for $j = 2$.
\end{proof}
It would not be difficult to generalize our result to reasonable domains $\Omega(\epsilon)$ such that $\capacity(\Omega \setminus \Omega(\epsilon)) \searrow 0$. Observe that the counterexample to the Payne property given in \cite{HHN} has a non-zero capacity obstacle that is removed from the domain. It would also be interesting to vary $l$ depending on $\epsilon$ in the spirit of homogenization problems, as in \cite{RT}.

\subsection{Proof of Corollary \ref{cor:Lewy}}
Now we give an outline of the proof of Corollary \ref{cor:Lewy} as a straightforward application of Theorem \ref{thm:main_thm}. Lewy proved a very interesting lower estimate on the number of nodal domains of spherical harmonics. The following is a combination of Theorems 1 and 2 from \cite{L}:
\begin{theorem}[Lewy]\label{thm:Lewy}
Let $k \in \NN$ be odd. Then there is a spherical harmonic of degree $k$ with exactly two nodal domains. Let $k \in \NN$ be even. Then there is a spherical harmonic of degree $k$ with exactly three nodal domains.
\end{theorem}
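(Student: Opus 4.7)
The plan is to exhibit, for each degree $k$, an explicit spherical harmonic realizing the required nodal domain count. Since a degree-$k$ spherical harmonic is the restriction of a homogeneous harmonic polynomial of degree $k$ on $\RR^3$, its nodal set on $S^2$ is invariant under the antipodal map $\sigma$ (while the harmonic itself picks up a sign $(-1)^k$). For odd $k$ the target is a single $\sigma$-invariant Jordan curve, necessarily cutting $S^2$ into two antipodally-paired domains; for even $k$, the target is a $\sigma$-symmetric pair of disjoint Jordan curves isolating two polar caps and an equatorial annulus, giving three domains.

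The cases $k=1$ and $k=2$ are immediate from zonal harmonics: the degree-one harmonic $z$ vanishes on the equator (two domains), and $P_2(\cos\theta) = \tfrac{1}{2}(3\cos^2\theta - 1)$ vanishes on the two latitudes $\cos\theta = \pm 1/\sqrt{3}$ (three domains). For higher $k$ the zonal harmonic $P_k(\cos\theta)$ has $k$ nodal parallels and thus $k+1$ nodal domains, which is too many, so I would deform it within the degree-$k$ spherical harmonic space so that nodal circles merge. I would work with a family respecting a $k$-fold rotational symmetry about the polar axis,
$$
Y_\alpha(\theta,\phi) \;=\; P_k(\cos\theta) \;+\; \alpha\, c_k\, \sin^k\theta\, \cos(k\phi),
$$
a legitimate degree-$k$ spherical harmonic because $\sin^k\theta\cos(k\phi)$ is (up to constants) $\Re(x+iy)^k$ restricted to the sphere, a homogeneous harmonic of degree $k$.

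At $\alpha = 0$ the nodal set is $k$ parallels; as $\alpha$ grows, each parallel breaks into a wavy curve with $k$-fold symmetry, and at discrete bifurcation values of $\alpha$ neighbouring components touch tangentially at $k$ symmetric saddle points and reconnect, reducing the component count. By Morse-theoretic tracking of the critical points of $Y_\alpha$ along a single meridian $\phi = 0$ (where the symmetry reduces the study of the nodal set essentially to the zero set of a one-variable function of $\cos\theta$), one can identify the bifurcation values $0 < \alpha_1 < \alpha_2 < \cdots$ at which successive circle pairs merge. For odd $k$ one continues the deformation through all bifurcations until a single $\sigma$-invariant Jordan curve remains; for even $k$ one stops at the stage where exactly two $\sigma$-symmetric deformed parallels persist, yielding the equatorial-band plus two-caps configuration.

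The hard part is precisely this bookkeeping of bifurcations: one must verify that the transitions are all mergers of connected components (not spurious splittings or new components birthed away from the existing nodal set), and that the final stopped configuration has exactly one (resp.\ two) Jordan components rather than, for example, a figure-eight or an extra isolated bubble. The chosen symmetry is what makes this tractable, collapsing the analysis to a one-dimensional transcendental equation in $\cos\theta$; an intermediate value argument in $\alpha$, combined with classical sign and derivative properties of the associated Legendre functions, then pins down a parameter value realising the desired Jordan-curve configuration. This strategy follows, in spirit, Lewy's original construction.
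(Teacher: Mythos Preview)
The paper does not prove this theorem at all: Theorem~\ref{thm:Lewy} is simply quoted as a known result of Lewy and attributed to \cite{L}, with no argument given. It is used as a black box in the proof of Corollary~\ref{cor:Lewy}. So there is no ``paper's own proof'' to compare against.

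As for your sketch itself: the strategy you describe --- deforming within the degree-$k$ eigenspace using the one-parameter family $P_k(\cos\theta) + \alpha\,c_k\sin^k\theta\cos(k\phi)$ and tracking how the nodal parallels of the zonal harmonic merge as $\alpha$ varies --- is indeed the approach of Lewy's original paper \cite{L}. You are right that the content lies entirely in the bifurcation bookkeeping, and your proposal does not carry that out; the phrases ``an intermediate value argument in $\alpha$'' and ``classical sign and derivative properties of the associated Legendre functions'' stand in for several pages of actual estimates in \cite{L}. In particular, one must rule out that a merger creates a singular nodal point (which would persist for an open set of $\alpha$ only in non-generic situations but must nonetheless be excluded at the chosen parameter value), and one must verify that for even $k$ there is a parameter at which exactly two smooth Jordan curves remain rather than, say, a single curve plus a point. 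So your proposal is a faithful outline of the correct argument but is not a proof; since the paper itself does not attempt one, the appropriate course is simply to cite \cite{L} as the paper does.
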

 Note that, when a spherical harmonic has exactly two nodal domains, the nodal line is diffeomorphic to a circle and when a spherical harmonic has exactly three nodal domains, the nodal line is the union of two disjoint embedded circles (see for example, Theorem 2.5(ii) of \cite{Ch}). An application of Theorem \ref{thm:main_thm} together with the ideas of Proposition \ref{cor:Melas} now allows us to construct such eigenfunctions with minimal nodal domains on any closed genus $\gamma$ surface, which has been recorded in Corollary \ref{cor:Lewy}. 
Note that, however, this metric has positive curvature for the most part, and a small region of high negative curvature. It would be interesting to investigate whether in addition one can force this metric to be of constant negative curvature. Maybe some ideas involving geometric flows might help (for example, as in \cite{Mu}).

\section{Acknowledgements} The authors are grateful to IIT Bombay for providing ideal working conditions. The second named author would like to thank the Council of Scientific and Industrial Research, India for funding which supported his research. The authors would like to acknowledge useful conversations with Bogdan Georgiev, Gopala Krishna Srinivasan and Harsha Hutridurga and the first author would like to thank Nitin Nitsure for asking him the question of sand patterns on symmetrical plates mentioned on page 2. 
Further, the authors are deeply indebted to Junya Takahashi for giving a detailed clarification of certain aspects of Theorem \ref{thm:dom_eigen_conv}. 


\end{document}